\newtheorem{theorem}{Theorem}[section]
\newtheorem{lemma}{Lemma}[section]
\newcommand{\eqn}{\begin{eqnarray}}
\newcommand{\een}{\end{eqnarray}}
\newcommand{\ZZ}{\mathbb{Z}}
\newcommand{\pat}{\partial_t}
\definecolor{luh-dark-blue}{rgb}{0.0, 0.313, 0.608}
\newcommand{\bR}{\mathbb{R}}
\newcommand{\bT}{\mathbb{T}}
\newcommand{\bZ}{\mathbb{Z}}
\numberwithin{equation}{section}
\newcommand\reallywidehat[1]{%
\savestack{\tmpbox}{\stretchto{%
  \scaleto{%
    \scalerel*[\widthof{\ensuremath{#1}}]{\kern-.6pt\bigwedge\kern-.6pt}%
    {\rule[-\textheight/2]{1ex}{\textheight}}%WIDTH-LIMITED BIG WEDGE
  }{\textheight}%
}{0.5ex}}%
\stackon[1pt]{#1}{\tmpbox}%
}
\def\cb{c_{\m{B}}}
\def\cs{c_{\m{S}}}
\newcommand{\m}[1]{{\mathcal{ #1}}}
\newcommand{\w}[1]{{\widehat{ #1}}}
\newcommand{\mS}[1]{{S^{(#1)}}}
\newcommand{\mB}[1]{{B^{(#1)}}}
\newcommand{\mP}[1]{{P^{(#1)}}}
\newcommand{\mQ}[1]{{Q^{(#1)}}}
\newcommand{\mZ}[1]{{Z^{(#1)}}}
\newcommand{\mh}[1]{{h^{(#1)}}}
\newcommand{\mb}[1]{{b^{(#1)}}}
\def\Gt{\Gamma_{top}}
\def\Gb{\Gamma_{bot}}
\def\q{\quad}
\def\qq{\qquad}
\def\Ga{\Gamma}
\def\lm{\lambda}
\def\de{\delta}
\def\si{\sigma}
\def\ga{\gamma}
\def\b{\beta}
\def\eps{\varepsilon}
\def\N{\nabla}
\def\D{\Delta}
\def\tsi{\tilde{\si}}
\let\cosh\relax % release ("undefine") "\lim" as a macro name
\DeclareMathOperator*{\cosh}{\t{ch}}
\let\sinh\relax % release ("undefine") "\lim" as a macro name
\DeclareMathOperator*{\sinh}{\t{sh}}
\let\coth\relax % release ("undefine") "\lim" as a macro name
\DeclareMathOperator*{\coth}{\t{cth}}
\newcommand{\pare}[1]{\left( #1 \right)}
\newcommand{\norm}[1]{\left\| #1 \right\|}
\newcommand{\av}[1]{\left| #1 \right|}
\newcommand{\bra}[1]{\left[ #1 \right]}
\newcommand{\set}[1]{\left\{ #1 \right\}}
\renewcommand{\t}[1]{\text{#1}}
\newcommand{\diff}[1]{\de\hspace{-1mm}\bra{#1}}
\newcommand{\ch}[1]{\cosh\pare{#1}}
\newcommand{\sh}[1]{\sinh\pare{#1}}
\newcommand{\cth}[1]{\coth\pare{#1}}
\begin{document}

\setlength{\abovedisplayskip}{5pt}
\setlength{\belowdisplayskip}{5pt}

\setlength{\jot}{7pt}

\title[Multilayer tumor growth]{A new asymptotic model of multilayer tumor growth}

\author{Rafael Granero-Belinch\'on}
\address{Departamento de Matem\'aticas, Estad\'istica y Computaci\'on, Universidad de Cantabria, Spain.}
\email{rafael.granero@unican.es}

\author{Martina Magliocca}
\address{Departamento de An\'alisis Matem\'atico, Universidad de Sevilla, Spain.}
\email{mmagliocca@us.es}

\date{\today}

\subjclass[2010]{}

\keywords{}

\begin{abstract}
In this paper we study the growth of a tumor colony of multilayer type and focus on how the tumor grows from a near flat (when compared to the length of the tumor as, for instance, in the case of a bone tumor in a femur) initial colony. In particular we derive and study a new weakly nonlinear asymptotic model of multilayer tumor growth. The model takes the form of a nonlinear and nonlocal high order system of PDEs. Finally, motivated by the possibility of a finite time collision of the interfaces, we study the well-posedness of this system. 
\end{abstract}

\maketitle

\setcounter{tocdepth}{3}

{\small \tableofcontents}

\section{Introduction}

Cancer is a very complex phenomenon that involves many different scales and situations. One of the easiest tumor models  consists
only of proliferating cancer cells ({\it i.e.}, the tumor expands and so does its boundary),  whose growth is regulated by an externally supplied nutrient, such as oxygen or glucose. The model gets more interesting if we also assume that a certain inhibitor (for instance chemotherapy, drugs...) acts on the proliferating cells, trying to stop their growth. \\
Of course, tumor can be made up of different types of cells, depending on the tumor status. Beyond proliferating cells, we could also consider necrotic {\it i.e.}, dead cells) and quiescent cells ({\it i.e.},  cells that are still alive but stopped proliferating). 

The importance of this illness together with the complexity of the phenomenon lead to a large number of research studies by many different research groups in Biology and Medicine but also in Physics, Mathematics or Engineering. In fact, the first mathematical models related to cancer goes back to the fifties (see \citep{araujo2004history,byrne2010dissecting} for very complete reviews). \\
From a mathematical point of view, the main questions related to tumor models concern well-posedness, bifurcation phenomena, and asymptotic stability. We quote \citep{friedman1999analysis} for a mathematical overview. \\
Also, the mathematical approach to this kind of problems has been both discrete and continuous. We refer, for instance, to  \citep{d2007nonlinear,d2008metamodeling} in the first case, and \citep{wu2020asymptotic,cui2002analysis,zhou2009existence} in the latter.

In real life, tumors grow under physical spatial constraints. Particular examples of this are skin or bone tumors, where the tumor colony initiates its growth in a domain with a specific geometry. For instance, a bone tumor is constrained to a domain whose longitudinal measure may clearly be larger than the rest of the dimensions. This specific geometry could potentially be of great importance in how the tumor develops \citep{fiore2020mechanics}. However, the challenges of tumor growth and its mathematical modelling cause that most of the mathematical results are obtained in a simplified geometrical setting. For instance, many mathematical results are available for what are called multicellular spheroids, \emph{i.e.} when the tumor boundary is a sphere whose radious grows or shrinks.  However, an important point to underline is that tumors {\it in vivo} are not radially symmetric. Indeed,
it is known that the space occupied by the interacting cancer cells possesses fractal structures \citep{d2009fractal}.

In this paper we study the growth of a tumor colony of multilayer type and we will focus on how the tumor grows from a near flat (when compared to the length of the tumor as, for instance, in the case of a bone tumor in a femur) initial colony. In other words, we want to mathematically describe the growth of a long tumor with a small width and see how this particular geometry influences the dynamics. In this regards, we find an asymptotic model that accurately describe the growth of the tumor in the regime where the dimensionless parameter
$$
\frac{\text{width}}{\text{length}}=\varepsilon\ll 1.
$$

In our opinion, one of the main questions in the mathematical description of tumor growth is whether the presence of an upper and a lower tumor boundaries could lead to a finite time split of the tumor colony into two separated sets of oncogenic cells. To the best of our knowledge, such a real life problem has not been mathematically described. In fact, we suspect that it is a really challenging mathematical question that could shed light on the validity of the models that we are using to describe tumor growth. To advance towards an answer in this paper we derive and study an asymptotic model of a multilayer tumor having upper and lower free boundaries.

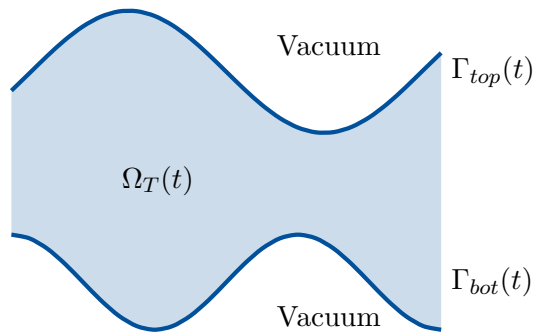
\begin{figure}[H]\label{Fig4}
%\begin{center}
\begin{tikzpicture}[domain=-pi:pi, scale=0.9]

\shade[top color=luh-dark-blue!20, bottom color=luh-dark-blue!20] plot[domain=-pi:pi] (\x,{2.1-0.9*sin(1.1*\x r)}) -- plot[domain=pi:-pi] (\x,{-1+0.7*sin(1.5*\x r)});

\draw[ultra thick, smooth, variable=\x, luh-dark-blue] plot (\x,{2.1-0.9*sin(1.1*\x r)});

\draw[ultra thick, smooth, variable=\x, luh-dark-blue] plot (\x,{-1+0.7*sin(1.5*\x r)});

\node[right]  at (pi,2.1) {$\Gt(t)$};
\node[right]  at (pi,-1) {$\Gb(t)$};
\node at (1.5,2.5) {Vacuum};
\node at (1.5,-1.5) {Vacuum};
\node  at (-1,0.5) {$\Omega_T(t)$};
\end{tikzpicture}
%\end{center}
\caption{The one-phase case with only the tumor colony}
\end{figure}

Such configuration appears in several real life tumor such as the ones in Cristini \& Lowengrub \citep{cristini2010multiscale} or \citep{fiore2020mechanics} and references therein.

We consider a tumor colony described as
\begin{align*}%\label{OmegaStrip_inf}
\Omega_T(t)&=\set{x_1\in L\mathbb{S}^1,\;-L+b(x_1,t)<x_2<h(x_1,t)},
\end{align*}
where $L$ is the typical length of the tumor colony and $L\mathbb{S}^1$ denotes the circle with length 2$L$ (or equivalently the interval $[-L\pi,L\pi]$ with periodic boundary conditions).
The boundary $\Gamma(t)$ of this domain has two components $\Gt(t)$ and $\Gb(t)$:
\begin{align*}
\Gt(t)&=\set{ x \in\mathbb{R}^2, x_1\in L\mathbb{S}^1,\;x_2=h(x_1,t)},\\
%\label{GammaStrip1_top}
\Gb(t)&=\set{ x \in\mathbb{R}^2, x_1\in L\mathbb{S}^1,\;x_2=-L+b(x_1,t)},\\
%\label{GammaStrip1_bot}
\Ga(t)&=\Gt(t)\cup\Gb(t).
\end{align*}
In addition, we assume that the tumor dynamics is modelled using the following free boundary problem
\begin{subequations}\label{eq:pb-dim-inf}
\begin{align}
\frac{\partial \sigma}{\partial t}-D_n\Delta \sigma&=\delta_n\left(\sigma_B-\sigma\right)-\lambda_n \sigma -\gamma_n\beta&&\text{ in }\Omega_T(t)\times(0,T),\label{eq:siT-dim-tb}\\
\frac{\partial \beta}{\partial t}-D_i\Delta \beta&=\delta_i\left(\beta_B-\beta\right)-\lambda_i \beta&&\text{ in }\Omega_T(t)\times(0,T),\label{eq:bT-dim-tb}\\
 {u}&=-\nabla p+\chi\nabla \sigma&&\text{ in }\Omega_T(t)\times(0,T),\label{eq:u-dim-tb}\\
\nabla\cdot  {u}&=\mu(\sigma-\tilde{\sigma}-\tau\beta)&&\text{ in }\Omega_T(t)\times(0,T),\label{eq:divu-dim-tb}\\
p&=-\nu\frac{h,_{11}}{\pare{1+h,_1^2}^{3/2}}&&\text{ on }\Gt(t)\times(0,T),\label{eq:p-dim-top}\\
p&=\nu\frac{b,_{11}}{\pare{1+b,_1^2}^{3/2}}&&\text{ on }\Gb(t)\times(0,T),\label{eq:p-dim-bot}\\
\frac{\partial h}{\partial t}&= {u}\cdot (-h,_1,1)&&\text{ on }\Gt(t)\times(0,T),\label{eq:dth-dim-top}\\
\frac{\partial b}{\partial t}&= {u}\cdot (-b,_1,1)&&\text{ on }\Gb(t)\times(0,T),\label{eq:dtb-dim-bot}\\
\si&=\si_D&&\text{ on }\Ga(t)\times(0,T),\\
\b&=\b_D&&\text{ on }\Ga(t)\times(0,T),
\end{align}
\end{subequations}
with the initial data
$$
\si(x,0)=\si_0(x),\qq
\beta(x,0)=\beta_0(x)\t{ in }\Omega(0),\qq
h(x_1,0)=h_0(x_1),\qq
b(x_1,0)=b_0(x_1).
$$
In the previous system $\sigma$ and $\beta$ are the nutrients and inhibitors for the cancer colony, respectively. We assume that these chemicals are consumed by the tumor and diffused through the tumoral tissue. We also assume that the tumor has access to vasculature via the $\delta_{n}$ and $\delta_{i}$ terms. The term $u$ is the \emph{velocity} of the tumor growth. We assume that such a velocity follows a Darcy's law \citep{greenspan1972models,greenspan1974self,greenspan1976growth} with a chemotactic term proportional to $\chi$. The oncogenic cells are assumed to follow a standard Laplace-Young law \citep{greenspan1972models,greenspan1974self,greenspan1976growth}. Finally, $h$ and $b$ are the tumor upper and lower free boundaries. These boundaries are unknown for the problem and must be solved for. In the case of a infinitely deep tumor following the same laws we refer to \citep{granero2025nonlocal}.\\
It has been pointed out to us that the term $-\ga_n\b$ in equation \eqref{eq:siT-dim-tb} lacks realism. However, as in \citep{byrne1995growth}, we keep it for the sake of completeness.

In this paper, for the previous system \eqref{eq:pb-dim-inf} we derive the following system of nonlinear and nonlocal PDEs
\begin{align}\label{eq:patU}
\pat U(x_1,t)
&=-\eta\bra{\Theta_1 \pare{\Lambda^2 U(x_1,t)}+\Theta_2 \pare{\Lambda^2 V(x_1,t) }}\nonumber\\
&\q+\eps\eta
\Theta_1\pare{U(x_1,t)\Theta_1\Lambda^2U(x_1,t)}
+\eps\eta
\Theta_1\pare{U(x_1,t)\Theta_2\Lambda^2V(x_1,t)}\nonumber\\
&\q+\eps\eta\pare{U(x_1,t)\Lambda^2U(x_1,t),_1},_1\nonumber\\
&\q -\eps\eta\Theta_2\pare{V(x_1,t)\Theta_2\Lambda^2U(x_1,t)}-\eps\eta
\Theta_2\pare{V(x_1,t)\Theta_1\Lambda^2V(x_1,t)}\nonumber
\\
&\q -\eps\bra{
\theta\b_1(t)e^{-Nt}+
\frac{\rho}{12}\b_2(t)e^{-Nt}
}
\pare{\Theta_1 U(x_1,t)+\Theta_2 V(x_1,t)}\nonumber\\
&\q +K^{(0)}(x_1,t)+\eps  K^{(1)}(x_1,t),
\end{align}
and
\begin{align}\label{eq:patV}
\pat V(x_1,t)
&=-\eta\bra{
\Theta_2\pare{\Lambda^2U(x_1,t)}+\Theta_1\pare{\Lambda^2V(x_1,t)}
}\nonumber\\
&\q+ \eps\eta\Theta_2\pare{U(x_1,t)\Theta_1\Lambda^2U(x_1,t)}+\eps\eta\Theta_2\pare{U(x_1,t)\Theta_2\Lambda^2V(x_1,t)}\nonumber\\
&\q-\eps\eta\Theta_1\pare{V(x_1,t)\Theta_2\Lambda^2U(x_1,t)}-\eps\eta\Theta_1\pare{V(x_1,t)\Theta_1\Lambda^2V(x_1,t)} \nonumber\\
&\q -\eps\eta \pare{V(x_1,t)\Lambda^2V,_1(x_1,t)},_1\nonumber\\
&\q -\eps\bra{
\theta e^{-Nt}\b_1(t)
+
\frac{\rho}{12} e^{-Nt}\b_2(t)
}\pare{\Theta_2 U(x_1,t)+\Theta_1 V(x_1,t)}\nonumber\\
&\q -K^{(0)}(x_1,t)+\eps J^{(1)}(x_1,t),
\end{align}
where 
\begin{equation*}\label{eq:O1O2}
\Theta_1=\Lambda \coth(\Lambda)\qq\t{and}\qq \Theta_2=\frac{\Lambda}{\sh{\Lambda}},
\end{equation*}
and $K^{0},$ $K^{1}$ and $J^{1}$ are forcings that depend on $x_1$ and $t$. Such a model reflects the dynamics of the tumor colony up to an error proportional to
$$
\eps^2=\left(\frac{\text{width}}{\text{length}}\right)^2\ll 1.
$$
Besides the derivation of such a system, we also establish the following result.

\begin{theorem}
Let $(U_0,V_0)\in A^1$ be the zero mean initial data. Assume that
$$
\|U_0\|_{A^0}+\|V_0\|_{A^0}+\|U_0\|_{A^1}+\|V_0\|_{A^1}\leq C.
$$
Then, there exists a unique solution of \eqref{eq:patU}-\eqref{eq:patV} in the case $K^{(0)}=K^{(1)}=J^{(1)}=0$ such that
$$
(U,V)\in C([0,T];A^1\times A^1)\cap L^1(0,T;A^4\times A^4),
$$
for certain $0<T=T(\|U_0\|_{A^0}+\|V_0\|_{A^0}+\|U_0\|_{A^1}+\|V_0\|_{A^1})$.
\end{theorem}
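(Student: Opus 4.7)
My approach is to exploit the third-order parabolic smoothing of the linear part of the system in the Wiener-algebra scale $A^s$, with norm $\|f\|_{A^s}=\sum_{k\neq 0}|k|^s|\widehat{f}(k)|$. The first step is to diagonalize the principal linear operator of \eqref{eq:patU}--\eqref{eq:patV}: the change of variables $F=U+V$, $G=U-V$, combined with the half-angle identities $\Theta_1+\Theta_2=\Lambda\cth{\Lambda/2}$ and $\Theta_1-\Theta_2=\Lambda\th{\Lambda/2}$, produces decoupled linear equations whose Fourier symbols are $\eta|k|^3\cth{|k|/2}$ and $\eta|k|^3\th{|k|/2}$, both bounded below by $c\eta|k|^3$ on integer modes $|k|\geq 1$ (the zero mode is eliminated by the zero-mean hypothesis). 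This provides a third-order parabolic dissipation in each of the two decoupled equations.

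Differentiating $|\widehat{F}(k)|$ and $|\widehat{G}(k)|$ mode by mode (Kato's sign trick), multiplying by $|k|$, and summing over $k$ then yields the master energy inequality
\begin{equation*}
\frac{d}{dt}\bigl(\|U\|_{A^1}+\|V\|_{A^1}\bigr)+c\eta\bigl(\|U\|_{A^4}+\|V\|_{A^4}\bigr)\leq \|N_U\|_{A^1}+\|N_V\|_{A^1},
\end{equation*}
where $N_U,N_V$ collect all the nonlinear and lower-order contributions of \eqref{eq:patU}--\eqref{eq:patV}. These are estimated using the Banach-algebra structure of $A^1$ together with the operator bound $\|\Theta_j\Lambda^2 f\|_{A^s}\lesssim\|f\|_{A^{s+3}}$, which holds since $|\xi|\cth{|\xi|}\lesssim|\xi|$ and $|\xi|/\sh{|\xi|}\leq 1$ on $|\xi|\geq 1$. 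This delivers $\|N_U\|_{A^1}+\|N_V\|_{A^1}\leq C\eps\,\mathcal{E}(t)\,\mathcal{D}(t)+C\eps\,\mathcal{E}(t)$, where $\mathcal{E}=\|U\|_{A^1}+\|V\|_{A^1}$ and $\mathcal{D}=\|U\|_{A^4}+\|V\|_{A^4}$. A bootstrap then closes the argument: as long as $C\eps\,\mathcal{E}(t)\leq c\eta/2$, the dissipation absorbs the worst nonlinear term and Gr\"onwall yields $\mathcal{E}(t)\leq \mathcal{E}(0)e^{C\eps t}$ and $\int_0^t\mathcal{D}(s)\,ds\lesssim\mathcal{E}(0)$; the threshold persists on some $[0,T]$ with $T=T(\mathcal{E}(0))>0$.

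A rigorous construction is obtained either by a Galerkin truncation (projecting onto the first $N$ Fourier modes reduces the system to an ODE, the a priori bounds above are uniform in $N$, and the $L^1_t A^4$ control provides enough compactness to pass to the limit) or by a Picard iteration on the mild Duhamel formulation in $C([0,T];A^1)\cap L^1(0,T;A^4)$. Uniqueness follows from the same energy estimate run on the difference of two solutions, which satisfies a linearised system of the same structure. The main obstacle is the criticality of the problem: the nonlinearity and the dissipation are of the same differential order, so the absorption succeeds only because the algebra bound produces the cross-term $\mathcal{E}\mathcal{D}$ rather than $\mathcal{D}^2$, and only under a quantitative smallness of $C\eps\,\mathcal{E}(0)$ relative to $c\eta$, which is what the hypothesis $\|U_0\|_{A^0}+\|V_0\|_{A^0}+\|U_0\|_{A^1}+\|V_0\|_{A^1}\leq C$ ultimately encodes. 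A secondary subtlety is that several nonlinear terms carry an outer $\Theta_j$ factor that a priori costs one additional derivative; this must be handled either by combining the $A^1$ algebra bound with interpolation between $A^1$ and $A^4$, or by a paraproduct/commutator decomposition of the Fourier multiplier.
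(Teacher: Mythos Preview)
Your overall framework---energy estimates in the Wiener scale $A^s$, third-order parabolic dissipation from the linear part, and a Gr\"onwall/bootstrap closure---matches the paper's. Your diagonalisation $F=U+V$, $G=U-V$ is a clean alternative to what the paper actually does, which is simpler: since $\Theta_2\Lambda^n$ has bounded symbol for every $n$, the cross term $\Theta_2\Lambda^2 V$ in the $U$-equation is treated directly as a zeroth-order perturbation $c\|V\|_{A^0}$, and $-\Theta_1\Lambda^2 U$ alone furnishes the dissipation $-\|U\|_{A^3}$.

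The genuine gap is in the nonlinear estimate. Your claimed bound $\|N_U\|_{A^1}\leq C\eps\,\mathcal{E}\,\mathcal{D}$ does \emph{not} follow from the algebra property plus the operator bound $\|\Theta_j\Lambda^2 f\|_{A^s}\lesssim\|f\|_{A^{s+3}}$. Take the worst single term $\Theta_1(U\,\Theta_1\Lambda^2 U)$: in $A^1$ its weight is $|k|^2\coth(|k|)$ on the outside and $|m|^3\coth(|m|)$ on the inside, so after $|k|^2\lesssim|k-m|^2+|m|^2$ one is left with a contribution $\|U\|_{A^0}\|U\|_{A^5}$, which cannot be interpolated between $A^1$ and $A^4$. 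The companion term $(U\Lambda^2 U,_1),_1$ has exactly the same defect. A generic paraproduct split does not save this: the low--high piece (small $|k-m|$, $|m|\sim|k|$) still produces $\|U\|_{A^5}$.

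What the paper exploits, and what your proposal does not identify, is that these two specific terms must be estimated \emph{together}. Their combined Fourier multiplier is
\[
\coth(|k|)|k|\,\coth(|k-m|)|k-m|^3-k(k-m)^3
= |k||k-m|^3-k(k-m)^3+\text{(exponentially lower order)},
\]
and the leading part vanishes whenever $\sgn(k)=\sgn(k-m)$. When the signs differ one has $|k|\leq|m|$, which lets the outer $|k|$ be absorbed by the undifferentiated factor $\widehat U(m)$; this yields $\|U\|_{A^2}\|U\|_{A^3}$ at the $A^1$ level, and then interpolation gives $\|U\|_{A^1}\|U\|_{A^4}=\mathcal{E}\,\mathcal{D}$. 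The analogous pairing $\Theta_1(V\,\Theta_1\Lambda^2 V)+(V\Lambda^2 V,_1),_1$ occurs in the $V$-equation. Your final sentence (``paraproduct/commutator decomposition of the Fourier multiplier'') gestures toward this, but the mechanism is not a commutator on a single term: it is a cancellation between two distinct nonlinear terms whose top-order symbols differ only by a sign pattern. Without making this explicit, the estimate does not close.
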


\subsection{Notation}
We denote the one dimensional torus by $ \mathbb{S}^1 = \bR / 2\pi\bZ $. Alternatively, this domain can be thought as the interval $ [-\pi, \pi] $ with periodic boundary conditions. Given a matrix $ A \in \bR^{n\times m} $ we denote with $A_j^i $ the entry of $ A $ at row $ i $ and column $ j $, and we adopt Einstein convention for the summation of repeated indexes. We write
$$
f,_j=\frac{\partial f}{\partial x_j},\qquad \pat f=\frac{\partial f}{\partial t },
$$
for the space derivative in the $j-$th direction and for the time derivative, respectively. Let $v(x_1)$ denote a $L^2$ function on $\bT$ (as usually, identified with the interval $[-\pi,\pi]$ with periodic boundary conditions). We recall its Fourier series representation:
	\begin{equation*}
	\hat{v}(n) = \frac{1}{2\pi}\int _{\bT} v(x_1) e^{-in x_1}dx_1,
	\end{equation*}
where $ n\in\bZ $. Then we have that
\begin{equation*}
	v(x_1) =\sum_{n\in\bZ} \hat{v}(n) \ e^{in x_1}.
	\end{equation*}

\section{The dimensionless ALE formulation}
We perform the non-dimensionalization argument similar to the one in \citep{granero2025nonlocal} to deduce that the dimensionless free boundary problem is given by the system
\begin{subequations}\label{eq:pb-adim-LL-hb}
\begin{align}
\pat \beta-\eps D\D \b&=-N_1\b+M_1&&\text{ in }\Omega_T(t)\times(0,T),\label{eq:bT-adim-LL-hb}\\
\pat \sigma- \eps\D \si&=-N_2\sigma-N_3\b+M_2&&\text{ in }\Omega_T(t)\times(0,T),\label{eq:siT-adim-LL-hb}\\
-\D p+\theta\D \si&=\rho\pare{\si-\tau \b}+\omega&&\text{ in }\Omega_T(t)\times(0,T),\label{eq:divu-adim-LL-hb}\\
p&= -\eta\frac{h,_{11}}{(1+(\eps h,_1)^2)^{3/2}}&& \t{ on }\Gt(t)\times(0,T),\label{eq:tp-adim-LL-hb}\\
p&= \eta\frac{b,_{11}}{(1+(\eps b,_1)^2)^{3/2}}&& \t{ on }\Gb(t)\times(0,T),\label{eq:bp-adim-LL-hb}\\
\pat h&=\left(-\N p+\theta \N\si\right)\cdot (-\eps h,_1,1)&& \t{ on }\Gt(t)\times(0,T),\label{eq:dth-adim-LL-hb}\\
\pat b&=\left(-\N p+\theta \N\si\right)\cdot (-\eps b,_1,1)&& \t{ on }\Gb(t)\times(0,T),\label{eq:dtb-adim-LL-hb}\\
\si &=0&&\text{ on }\Gamma(t)\times(0,T),\label{eq:jumpsi-adim-LL-hb}\\
\b &= 0&&\text{ on }\Gamma(t)\times(0,T),\label{eq:jumpb-adim-LL-hb}
\end{align}
\end{subequations}
 where the dimensionless domain and boundaries are
\begin{align*}
\Omega _T\pare{t}&=\set{ x \in\mathbb{R}^2,\, x_1\in \mathbb{S}^1,\, -1+\eps b(x_1,t)< x_2<\eps h(x_1,t)},
\\
\Gt\pare{t}&=\set{ x \in\mathbb{R}^2,\, x_1\in \mathbb{S}^1,\, x_2=\eps h(x_1,t) },\\
\Gb\pare{t}&=\set{ x \in\mathbb{R}^2,\, x_1\in \mathbb{S}^1,\, x_2=-1+\eps b(x_1,t) },\\
\Ga\pare{t}&=\Gt(t)\cup\Gb(t),
\end{align*}
and the parameters involved are given by
\begin{equation*}\label{eq:eps-al}
\eps=\frac{H}{L},\q D=\frac{D_i}{D_n},
\end{equation*}
\begin{equation*}\label{eq:N123}
N_1=LH\frac{\de_i+\lm_i}{D_n},\q N_2=LH\frac{\de_n+\lm_n}{D_n},\q N_3=\frac{\ga_n  LH}{D_n},
\end{equation*}
\begin{equation*}\label{eq:M12}
M_1=\frac{LH}{D_n\tsi}\bra{-(\de_i+\lm_i)\b_D+\de_i\b_B}
,\q  M_2=\frac{LH}{D_n\tsi}
\bra{
-\si_D(\de_n+\lm_n)+\si_B\de_n-\ga_n\b_D
},
\end{equation*}
\begin{equation*}\label{eq:theta-rho-om-eta}
\theta=\frac{\chi\tsi}{D_n},\q \rho=\frac{\mu\tsi L^2}{D_n},\q \omega=\frac{\mu L^2}{D_n}\pare{\si_D-\tau\b_D-\tsi}
,\q \eta=\frac{\nu H}{L^2D_n},
\end{equation*}
being $L$ and $H$ typical length and width associated to the tumor colony. In particular, $H$ is the \emph{amplitude} or width of the tumor colony while $L$ is the \emph{length} of the colony. This means that, if the parameter $\varepsilon$ is small, the tumor is \emph{long} compared to the \emph{width}.

Our aim is to transform the free boundary problem \eqref{eq:pb-adim-LL-hb} into a fixed boundary one. With this goal in mind, we introduce the time dependent diffeomorphism
\[
 {\psi}(\cdot,t):\Omega\to\Omega(t)
\]
defined as
\[
 {\psi}(x_1,x_2,t)=\pare{x_1,x_2+(x_2+1)\eps h(x_1,t)-x_2\eps b(x_1,t)}.
\]
The reference domain and boundaries are
$$
\Omega_T=\set{ x \in\mathbb{R}^2, x_1\in \mathbb{S}^1,\;-1 <x_2<0},
$$
and
\begin{align*}
\Gt&=\set{ x \in\mathbb{R}^2, x_1\in \mathbb{S}^1,\;x_2=0},\\
\Gb&=\set{ x \in\mathbb{R}^2, x_1\in \mathbb{S}^1,\;x_2=-1},\\
\Ga&=\Gt\cup\Gb.
\end{align*}

We now compose the equations in problem \eqref{eq:pb-adim-LL-hb} with $ {\psi}$ to find its equivalent formulation over $\Omega_T$ in terms of
\[
S=\si \circ {\psi},\q  B=\b   \circ {\psi},\q  
 P=p\circ {\psi}.
\]
 Since we are interested in the asymptotic behavior in $\eps$, we rewrite  these equivalent equations collecting  all the terms which depend on $\eps$ in the right hand side. A simple computation (akin to the one in \citep{granero2025nonlocal}) shows that the problem in the fixed domain reads
\begin{align*}
\pat B+N_1B-M_1&=\eps\bra{D\D B+\pare{(x_2+1) \pat h-x_2\pat  b} B,_2}
%&&\nonumber\\
%%
%&\q+\eps^2\bra{\al {R}_1[h-b](B)+(h-b)B,_{2}\pat a }
&&\text{ in }\Omega_T\times(0,T), \\
\pat S+N_2S+N_3B-M_2&=\eps\bra{\D S+\pare{(x_2+1) \pat h-x_2\pat  b}S,_2}
%&&\nonumber\\
%%
%&\q+\eps^2\bra{ {R}_1[h-b](S)+(h-b)S,_{2}\pat a}
&&\text{ in }\Omega_T\times(0,T), \\
-\D P+\theta\D S-\rho\pare{S-\tau B}-\omega&=-\eps Q(x,t)+\eps\theta Z(x,t)
%\nonumber&&\\
%%
%&\q+\eps^2\pare{ {R}_2[h-b](P)-\theta {R}_2[h-b](S)}
&&\text{ in }\Omega_T\times(0,T), \\
P&= -\eta\frac{h,_{11}}{(1+(\eps h,_1)^2)^{3/2}}&& \t{ on }\Gt\times(0,T), \\
P&= \eta\frac{b,_{11}}{(1+(\eps b,_1)^2)^{3/2}}&& \t{ on }\Gb\times(0,T), \\
\pat h&=-P,_2+\theta S,_2+\eps h,_1\pare{P,_1-\theta S,_1}&&\nonumber\\
&\q+\eps\pare{h-b}\pare{P,_2-\theta S,_2}%&&\nonumber\\
%&\q+\eps^2\pare{a,_1h,_1-(h-b)^2}\pare{P,_2-\theta S,_2}
&& \t{ on }\Gt\times(0,T), \\
\pat b&=-P,_2+\theta S,_2+\eps b,_1\pare{P,_1-\theta S,_1}&&\nonumber\\
&\q+\eps\pare{h-b}\pare{P,_2-\theta S,_2}%&&\nonumber\\
%&\q-\eps^2\pare{a,_1b,_1-(h-b)^2}\pare{P,_2-\theta S,_2}
&& \t{ on }\Gb\times(0,T), \\
S &=0&&\text{ on }\Gamma\times(0,T), \\
B &= 0&&\text{ on }\Gamma\times(0,T), 
\end{align*}
where
\begin{align*}
Q(x,t)&=\pare{(x_2+1) h,_{11}-x_2 b,_{11}}P,_2  +2\pare{(x_2+1) h,_{1}-x_2 b,_{1}}P,_{12}+2 (h-b)P,_{22} , \\
Z(x,t)&=\pare{(x_2+1) h,_{11}-x_2 b,_{11}} S,_2 +2\pare{(x_2+1) h,_{1}-x_2 b,_{1}}S,_{12}+2 (h-b) S,_{22}. 
\end{align*}

\section{The linear regime}
As we are interested on capturing the main dynamics of tumor growth from a near flat (when compared to the length of the tumor) initial tumor colony, we introduce the ansatz
$$
h(x_1,t)=\sum_{n=0}^\infty\eps^n h^{(n)}(x_1,t),\quad b(x_1,t)=\sum_{n=0}^\infty\eps^n b^{(n)}(x_1,t),
$$
$$
B(x,t)=\sum_{n=0}^\infty\eps^n B^{(n)}(x,t),\q
S(x,t)=\sum_{n=0}^\infty\eps^n S^{(n)}(x,t),\quad P(x,t)=\sum_{n=0}^\infty\eps^n P^{(n)}(x,t),
$$
where
$$
B(x,0)=B^{(0)}(x,0),\q
S(x,0)=S^{(0)}(x,0),\q
h(x_1,0)=h^{(0)}(x_1,0),\q
b(x_1,0)=b^{(0)}(x_1,0),
$$
and
$$
S^{(n)}(x,0)=B^{(n)}(x,0)=0,\q h^{(n)}(x_1,0)=0,\q b^{(n)}(x_1,0)=0\q\t{for }n\ge1.
$$
We consider the case where
$$
M_1=M_2=\omega=O(\eps^2).
$$
This regime is meaningful in some real life situation such as, for instance, the case of avascular tumor growth with very little access to both nutrients and inhibitors or the case of vascular tumors where the amount of nutrients and inhibitors present in the system is very similar to the amount consumed by the tumor cells.

Furthermore, we also set
\[
D=1,\qq N_1=N_2=N_3=N.
\]

In what follows, we will continue the approach in \citep{granero2025nonlocal} (see also \citep{granero2025nonlocal} for the explanation of the method in a simple toy model).

The first terms in the asymptotic series are
\begin{subequations}\label{eq:pb-adim-LL-hb-fixed-phase0}
\begin{align}
\pat \mB{0}+N\mB{0}&=0&&\text{ in }\Omega_T\times(0,T),\label{eq:bT-adim-LL-hb-fixed-phase0}\\
\pat \mS{0}+N\mS{0}+N\mB{0}&=0
&&\text{ in }\Omega_T\times(0,T),\label{eq:siT-adim-LL-hb-fixed-phase0}\\
-\D \mP{0}+\theta\D \mS{0}-\rho\pare{\mS{0}-\tau \mB{0}}&=0&&\text{ in }\Omega_T\times(0,T),\label{eq:divu-adim-LL-hb-fixed-phase0}\\
\mP{0}&= -\eta \mh{0},_{11}&& \t{ on }\Gt\times(0,T),\label{eq:tp-adim-LL-hb-fixed-phase0}\\
\mP{0}&= \eta \mb{0},_{11}&& \t{ on }\Gb\times(0,T),\label{eq:bp-adim-LL-hb-fixed-phase0}\\
\pat \mh{0}&=-\mP{0},_2+\theta \mS{0},_2 %-\eps^2\theta h,_1 aS,_2
&& \t{ on }\Gt\times(0,T),\label{eq:dth-adim-LL-hb-fixed-phase0}\\
\pat \mb{0}&=-\mP{0},_2+\theta \mS{0},_2 && \t{ on }\Gb\times(0,T),\label{eq:dtb-adim-LL-hb-fixed-phase0}\\
\mS{0} &=0&&\text{ on }\Gamma\times(0,T),\label{eq:jumpsi-adim-LL-hb-fixed-phase0}\\
\mB{0} &=0&&\text{ on }\Gamma\times(0,T).\label{eq:jumpb-adim-LL-hb-fixed-phase0}
%
%\mP{0} &= 0&&\text{ on }\Gamma\times(0,T),\label{eq:jumpp-adim-LL-hb-fixed-phase0}
\end{align}
\end{subequations}

We want to rewrite the evolution equations for $h$ and $b$ in \eqref{eq:dth-adim-LL-hb-fixed-phase0} and \eqref{eq:dtb-adim-LL-hb-fixed-phase0} in terms of the initial data that we represent by
\begin{equation}\label{eq:data}
B(x,0)=-\cb x_2(x_2+1)\qq\t{and}\qq S(x,0)=-\cs x_2(x_2+1).
\end{equation}
These initial data mean that the quantity of nutrients and inhibitors only depend on the distance to the boundaries, \emph{i.e.} \emph{the deeper into the colony, the smaller the quantity of chemicals}. These initial data are chosen so the mathematical analysis can be carried out in a explicit way. If a different choice is made, some of the following computations can be carried out but they will not necessarily be explicit and some quantities will not necessarily be written in closed form.

We start with the computations for $B^{(0)}(x_2,t)$ and $S^{(0)}(x_2,t)$. The equations \eqref{eq:bT-adim-LL-hb-fixed-phase0} -- \eqref{eq:jumpb-adim-LL-hb-fixed-phase0} and \eqref{eq:siT-adim-LL-hb-fixed-phase0} -- \eqref{eq:jumpsi-adim-LL-hb-fixed-phase0} imply, respectively, that
\begin{align}
B^{(0)}(x_2,t)&=-\cb e^{-Nt}x_2(x_2+1),\label{eq:B0}\\
S^{(0)}(x_2,t)&=-  e^{-Nt}\b_1(t)x_2(x_2+1)
\label{eq:S0},
\end{align}
for
\begin{equation}
\b_1(t)=\cs-N \cb t.\label{eq:al1}
\end{equation}
Then, we deduce that
\[
S^{(0)},_2(x_2,t) =-  e^{-Nt}\b_1(t) (2x_2+1),
\]
from which
\begin{align}
\mS{0},_2(0,t)&=-  e^{-Nt}\b_1(t) ,\label{eq:S0-der2-top}\\
\mS{0},_2(-1,t)&=   e^{-Nt}\b_1(t) .\label{eq:S0-der2-bot}
\end{align}

We now focus on $P^{(0)}(x,t)$. We apply Lemma \ref{lem:sol-ell} to \eqref{eq:divu-adim-LL-hb-fixed-phase0}, \eqref{eq:tp-adim-LL-hb-fixed-phase0}, and \eqref{eq:bp-adim-LL-hb-fixed-phase0}, with $g_1=-\eta \mh{0},_{11}$, $g_2=\eta \mb{0},_{11}$, and $w=w^{(0)}$
\begin{align*}
w^{(0)}(x_2,t)&=\theta  \D S^{(0)}(x_2,t)-\rho \pare{S^{(0)}(x_2,t)-\tau B^{(0)}(x_2,t)} =-e^{-N t}\pare{2\theta\b_1(t)
-\rho\b_2(t)x_2(x_2+1) },\label{eq:w0-P0}
\end{align*}
being
\begin{equation}\label{eq:al2}
\b_2(t)=\b_1(t)-\tau \cb.
\end{equation}
Then, we obtain the following expression for $\mP{0}$:
\begin{align*}
P^{(0)}(x,t)
&=  x_2\int_{-1}^0(1+y_2)w(y_2)\,dy_2+\int_{0}^{x_2}(x_2-y_2)w(y_2)\,dy_2  \nonumber\\
&\q+\frac{\eta}{\sqrt{2\pi}}\sum_{k\in\mathbb{Z}}
\biggl\{
\frac{\sinh(|k|x_2)}{\sinh(|k|)} \av{k}^3 \int_{-1}^0{\sinh(|k|(1+y_2))}\pare{  \w{\mh{0}}(k,t)+y_2\pare{\w{\mh{0}}(k,t)+\w{\mb{0}}(k,t)}}\,dy_2 \nonumber \\
&\qq +\av{k}^3\int_{0}^{x_2}\sinh(|k|(x_2-y_2))\pare{\w{\mh{0}}(k,t)+y_2\pare{\w{\mh{0}}(k,t)+\w{\mb{0}}(k,t)}}\,dy_2\nonumber\\
& \qq+  k^2\pare{\w{\mh{0}}(k,t)+x_2\pare{\w{\mh{0}}(k,t)+\w{\mb{0}}(k,t)}}
\biggr\}e^{ikx_1}.  
\end{align*}
We simplify this formula by direct computations and find
\begin{align}
P^{(0)}(x,t)
&=  -\theta\b_1(t)e^{-N t}x_2(x_2+1)+\frac{\rho}{12}\b_2(t)e^{-N t}x_2(x_2+1)(x_2^2+x_2-1)  \nonumber\\
&\q+\frac{\eta}{\sqrt{2\pi}}\sum_{k\in\mathbb{Z}}
\biggl\{
k^2\pare{\cth{\av{k}}\sh{\av{k}x_2}+\ch{\av{k}x_2}-1-x_2}\w{\mh{0}}(k,t)
 \nonumber \\
&\qq +   k^2\pare{\frac{\sh{\av{k}x_2}}{\sh{\av{k}}}-x_2}\w{\mb{0}}(k,t)\nonumber\\
& \qq+  k^2\pare{\w{\mh{0}}(k,t)+x_2\pare{\w{\mh{0}}(k,t)+\w{\mb{0}}(k,t)}}
\biggr\}e^{ikx_1},\label{eq:P0}
\end{align}
being $\b_1(t)$ and $\b_2(t)$ as in \eqref{eq:al1} and \eqref{eq:al2}, respectively. Hence, its normal derivative is
\begin{align*}
&P^{(0)},_2(x,t)\nonumber\\
&=  -\theta\b_1(t)e^{-N t}(2x_2+1)+\frac{\rho}{12}\b_2(t)e^{-N t}(2x_2+1)(2x_2^2+2x_2-1)  \nonumber\\
&\q+\frac{\eta}{\sqrt{2\pi}}\sum_{k\in\mathbb{Z}}\av{k}^3
\biggl\{
\pare{\cth{\av{k}}\ch{\av{k}x_2}+\sh{\av{k}x_2}}\w{\mh{0}}(k,t)
 +    \frac{\ch{\av{k}x_2}}{\sh{\av{k}}}\w{\mb{0}}(k,t)
\biggr\}e^{ikx_1},  
\end{align*}
and we deduce that its values on the boundaries are
\begin{align}
P^{(0)},_2(x_1,0,t)
&=  -\theta\b_1(t)e^{-N t}-\frac{\rho}{12}\b_2(t)e^{-N t}  \nonumber\\
&\q+\frac{\eta}{\sqrt{2\pi}}\sum_{k\in\mathbb{Z}}\av{k}^3
\biggl\{
\cth{\av{k}}\w{\mh{0}}(k,t)
+   \frac{1}{\sh{\av{k}}}\w{\mb{0}}(k,t)
\biggr\}e^{ikx_1} \nonumber\\
&=
-\theta\b_1(t)e^{-N t}-\frac{\rho}{12}\b_2(t)e^{-N t} +\eta\pare{\Lambda\cth{\Lambda}\Lambda^2\mh{0}(x_1,t)+\frac{\Lambda}{\sh{\Lambda}}\Lambda^2\mb{0}(x_1,t)}
,
 \label{eq:P0-der2-top}
\end{align}
and
\begin{align}
P^{(0)},_2(x_1,-1,t)
&=  \theta\b_1(t)e^{-N t}+\frac{\rho}{12}\b_2(t)e^{-N t}  \nonumber\\
&\q+\frac{\eta}{\sqrt{2\pi}}\sum_{k\in\mathbb{Z}}\av{k}^3
\biggl\{
\pare{\cth{\av{k}}\ch{\av{k}}-\sh{\av{k}}}\w{\mh{0}}(k,t)
 +   \cth{\av{k}}\w{\mb{0}}(k,t)
\biggr\}e^{ikx_1}\nonumber\\
&=  \theta\b_1(t)e^{-N t}+\frac{\rho}{12}\b_2(t)e^{-N t}  \nonumber\\
&\q+\frac{\eta}{\sqrt{2\pi}}\sum_{k\in\mathbb{Z}}\av{k}^3
\biggl\{
\frac{1}{\sh{\av{k}}}\w{\mh{0}}(k,t)
 +   \cth{\av{k}}\w{\mb{0}}(k,t)
\biggr\}e^{ikx_1}\nonumber\\
&=\theta\b_1(t)e^{-N t}+\frac{\rho}{12}\b_2(t)e^{-N t}
+\eta\pare{
\frac{\Lambda}{\sh{\Lambda}}\Lambda^2\mh{0}(x_1,t)
+\Lambda\cth{\Lambda}\Lambda^2\mb{0}(x_1,t)
}
.  \label{eq:P0-der2-bot}
\end{align}

Finally, we focus on the expressions of $\pat h^{(0)}(x_1,t)$ and $\pat b^{(0)}(x_1,t)$ in \eqref{eq:dth-adim-LL-hb-fixed-phase0} and \eqref{eq:dtb-adim-LL-hb-fixed-phase0}. The previous computations imply that
\begin{align}
\pat h^{(0)}(x_1,t)&=-P^{(0)},_2(x_1,0,t)+\theta \mS{0},_2(0,t)
\nonumber\\
&=-\eta\pare{\Lambda\cth{\Lambda}\Lambda^2\mh{0}(x_1,t)+\frac{\Lambda}{\sh{\Lambda}}\Lambda^2\mb{0}(x_1,t)}+K^{(0)}(x_1,t),\label{eq:ht0}
\end{align}
and
\begin{align}
\pat\mb{0}(x_1,t)&=-\mP{0},_2(x_1,-1,t)+\theta \mS{0},_2(-1,t)\nonumber\\
&=-\eta\pare{
\frac{\Lambda}{\sh{\Lambda}}\Lambda^2\mh{0}(x_1,t)
+\Lambda\cth{\Lambda}\Lambda^2\mb{0}(x_1,t)
}-K^{(0)}(x_1,t),\label{eq:bt0}
\end{align}
for
\begin{align}
K^{(0)}(x_1,t)
&=\frac{\rho}{12}\b_2(t)e^{-Nt},
\label{eq:K0}
\end{align}
with $\b_2(t)$ as in \eqref{eq:al2} respectively.

\section{The weakly nonlinear correction}
We now consider the case $n=1$, i.e. the terms multiplied by $\eps$ in the asymptotic expansions. The simplification of parameters we did and the choice of initial data in \eqref{eq:data} implies that
\[
\D \mB{0}=\mB{0},_{22},\qq \D \mS{0}=\mS{0},_{22},\qq \mB{0},_1=\mS{0},_1=0.
\]
Hence, the system is given by
\begin{subequations}\label{eq:pb-adim-LL-hb-fixed-1}
\begin{align}
\pat \mB{1}+N\mB{1}&=\pare{(x_2+1) \pat \mh{0}-x_2\pat  \mb{0}} \mB{0},_2
+\mB{0},_{22}
&&\text{ in }\Omega_T\times(0,T),\label{eq:bT-adim-LL-hb-fixed-1}\\
\pat \mS{1}+N\mS{1}+N\mB{1}&= \pare{(x_2+1) \pat \mh{0}-x_2\pat  \mb{0}}\mS{0},_2
+\mS{0},_{22}
&&\text{ in }\Omega_T\times(0,T),\label{eq:siT-adim-LL-hb-fixed-1}\\
\D \mP{1}-\theta\D \mS{1}+\rho\pare{\mS{1}-\tau \mB{1}}&= Q^{(0)}(x,t)- \theta Z^{(0)}(x,t)
&&\text{ in }\Omega_T\times(0,T),\label{eq:divu-adim-LL-hb-fixed-1}\\
\mP{1}&= -\eta \mh{1},_{11}&& \t{ on }\Gt\times(0,T),\label{eq:tp-adim-LL-hb-fixed-1}\\
\mP{1}&= \eta\mb{1},_{11}&& \t{ on }\Gb\times(0,T),\label{eq:bp-adim-LL-hb-fixed-1}\\
\pat \mh{1}&=-\mP{1},_2+\theta \mS{1},_2+\mh{0},_1\mP{0},_1&&\nonumber\\
&\q+ \pare{\mh{0}-\mb{0}}\pare{\mP{0},_2-\theta \mS{0},_2}
&& \t{ on }\Gt\times(0,T),\label{eq:dth-adim-LL-hb-fixed-1}\\
\pat \mb{1}&=-\mP{1},_2+\theta \mS{1},_2+  \mb{0},_1\mP{0},_1&&\nonumber\\
&\q+ \pare{\mh{0}-\mb{0}}\pare{\mP{0},_2-\theta \mS{0},_2}
&& \t{ on }\Gb\times(0,T),\label{eq:dtb-adim-LL-hb-fixed-1}\\
\mS{1} &=0&&\text{ on }\Gamma\times(0,T),\label{eq:jumpsi-adim-LL-hb-fixed-1}\\
\mB{1} &= 0&&\text{ on }\Gamma\times(0,T).\label{eq:jumpb-adim-LL-hb-fixed-1}
\end{align}
\end{subequations}
where
\begin{align}
Z^{(0)}(x,t)&=\pare{(x_2+1) \mh{0},_{11}(x_1,t)-x_2 \mb{0},_{11}(x_1,t)} \mS{0},_2(x,t) \nonumber\\
&\q+2 (\mh{0}(x_1,t)-\mb{0}(x_1,t)) \mS{0},_{22}(x,t),\label{eq:Z0}\\
Q^{(0)}(x,t)&=\pare{(x_2+1) \mh{0},_{11}(x_1,t)-x_2 \mb{0},_{11}(x_1,t)}\mP{0},_2(x,t) \nonumber\\
&\q +2\pare{(x_2+1) \mh{0},_{1}(x_1,t)-x_2 \mb{0},_{1}(x_1,t)}\mP{0},_{12}(x,t)\nonumber\\
&\q+2 (\mh{0}(x_1,t)-\mb{0}(x_1,t))\mP{0},_{22}(x,t).\label{eq:Q0}
\end{align}
We can express $Z^{(0)}$ and $Q^{(0)}$ in terms of the initial data (see the definition of $\mS{0}$ in \eqref{eq:S0} and the definition of $\mP{0}$ in \eqref{eq:P0}).

Again, we want to rewrite $\pat\mh{1}$ and $\pat\mb{1}$ (see \eqref{eq:dth-adim-LL-hb-fixed-1} and \eqref{eq:dtb-adim-LL-hb-fixed-1}) in terms of themselves and the initial data.

We begin focusing on the terms
\[
 \pare{\mh{0}-\mb{0}}\pare{\mP{0},_2-\theta \mS{0},_2} .
\]
We recall
\[
 \mS{0},_2(0,t)\q\t{and}\q  \mS{0},_2(-1,t)\qq\t{in \eqref{eq:S0-der2-top} and  \eqref{eq:S0-der2-bot}}
\]
to deduce that
\begin{align}
-\theta \pare{\mh{0}(x_1,t)-\mb{0}(x_1,t)}\mS{0},_2(0,t)&=\theta \pare{\mh{0}(x_1,t)-\mb{0}(x_1,t)}\mS{0},_2(-1,t)\nonumber\\
&= \theta e^{-Nt}\b_1(t) \pare{\mh{0}(x_1,t)-\mb{0}(x_1,t)},\label{eq:S0-der2-P1-top-bot}
\end{align}
with $\b_1(t)$ as in \eqref{eq:al1}.\\
We now consider on the terms involving $P^{(0)},_1(x,t)$. Thanks to \eqref{eq:P0}, we   compute
\begin{align*}
P^{(0)},_1(x_1,0,t)
&=\frac{i\eta}{\sqrt{2\pi}}\sum_{k\in\mathbb{Z}}
k^3\w{\mh{0}}(k,t)e^{ikx_1},\\
P^{(0)},_1(x_1,-1,t)
&=-\frac{i\eta}{\sqrt{2\pi}}\sum_{k\in\mathbb{Z}}
k^3\w{\mb{0}}(k,t)e^{ikx_1},
\end{align*}
from which we deduce
\begin{align}
 \mh{0},_1(x_1,t)\mP{0},_1(x_1,0,t)&=\frac{1}{\sqrt{2\pi}}\sum_{k\in\ZZ}\reallywidehat{ \mh{0},_1\mP{0},_1}(k,0,t) e^{ikx_1}\nonumber
\\
&=-\frac{\eta}{\sqrt{2\pi}}\sum_{k,m\in\mathbb{Z}}m^3(k-m)\w{\mh{0}}(k-m,t)\w{\mh{0}}(m,t)e^{ikx_1},\label{eq:P0-der1-P1-top}\\
 \mb{0},_1(x_1,t)\mP{0},_1(x_1,-1,t)&=\frac{1}{\sqrt{2\pi}}\sum_{k\in\ZZ}\reallywidehat{ \mb{0},_1\mP{0},_1}(k,-1,t) e^{ikx_1}\nonumber
\\
&=\frac{\eta}{\sqrt{2\pi}}\sum_{k,m\in\mathbb{Z}}m^3(k-m)\w{\mb{0}}(k-m,t)\w{\mb{0}}(m,t)e^{ikx_1}.\label{eq:P0-der1-P1-bot}
\end{align}
We recall \eqref{eq:P0-der2-top} and \eqref{eq:P0-der2-bot} to compute the terms with the derivatives of $P^{(0)},_2(x,t)$:
\begin{align}
&\pare{\mh{0}(x_1,t)-\mb{0}(x_1,t)}\mP{0},_2(x_1,0,t)\nonumber\\
&=-\pare{\theta\b_1(t)e^{-N t}+\frac{\rho}{12}\b_2(t)e^{-N t}}\pare{\mh{0}(x_1,t)-\mb{0}(x_1,t)}\nonumber\\
&\q+\frac{\eta}{\sqrt{2\pi}}\sum_{k,\,m\in\mathbb{Z}}\av{m}^3
\biggl\{
\cth{\av{m}}\w{\mh{0}}(m,t)
+   \frac{1}{\sh{\av{m}}}\w{\mb{0}}(m,t)
\biggr\}\pare{\w{\mh{0}}(k-m,t)-\widehat{\mb{0}}(k-m,t)}e^{ikx_1} ,\label{eq:P0-der2-P1-top}
\end{align}
and
\begin{align}
&\pare{\mh{0}(x_1,t)-\mb{0}(x_1,t)}\mP{0},_2(x_1,-1,t)\nonumber\\
&=\pare{\theta\b_1(t)e^{-N t}+\frac{\rho}{12}\b_2(t)e^{-N t}}\pare{\mh{0}(x_1,t)-\mb{0}(x_1,t)}\nonumber\\
&\q+\frac{\eta}{\sqrt{2\pi}}\sum_{k,\,m\in\mathbb{Z}}\av{m}^3
\biggl\{
\frac{1}{\sh{\av{m}}}\w{\mh{0}}(m,t)
 +   \cth{\av{m}}\w{\mb{0}}(m,t)
\biggr\}\pare{\w{\mh{0}}(k-m,t)-\widehat{\mb{0}}(k-m,t)}e^{ikx_1}.\label{eq:P0-der2-P1-bot}
\end{align}

We gather the computations in  \eqref{eq:S0-der2-P1-top-bot}, \eqref{eq:P0-der1-P1-top} and \eqref{eq:P0-der2-P1-top}, to deduce that, so far,  the evolution equations of $\mh{1}$ in \eqref{eq:dth-adim-LL-hb-fixed-1} is given by
\begin{align}
\pat \mh{1}(x_1,t)&=
 -\frac{\eta}{\sqrt{2\pi}}\sum_{k,m\in\mathbb{Z}}m^3(k-m)\w{\mh{0}}(k-m,t)\w{\mh{0}}(m,t)e^{ikx_1}\nonumber\\
&\q +\frac{\eta}{\sqrt{2\pi}}\sum_{k,m\in\mathbb{Z}}|m|^3\coth(|m|)\pare{\w{\mh{0}}(k-m,t)-\w{\mb{0}}(k-m,t)}\w{\mh{0}}(m,t)e^{ikx_1}\nonumber\\
&\q+\frac{\eta}{\sqrt{2\pi}}\sum_{k,m\in\mathbb{Z}}\frac{|m|^3}{\sh{|m|}}\pare{\w{\mh{0}}(k-m,t)-\w{\mb{0}}(k-m,t)}\w{\mb{0}}(m,t)e^{ikx_1}\nonumber\\
&\q-\mP{1},_2(x_1,0,t)+\theta\mS{1}(x_1,0,t)-K_1^{(1)}(x_1,t),\label{eq:h1-preP1yS1}
\end{align}
while, from \eqref{eq:S0-der2-P1-top-bot}, \eqref{eq:P0-der1-P1-bot} and \eqref{eq:P0-der2-P1-bot}, we deduce that  the evolution equations of $\mb{1}$ in \eqref{eq:dtb-adim-LL-hb-fixed-1} is given by
\begin{align}
\pat \mb{1}(x_1,t)&=
\frac{\eta}{\sqrt{2\pi}}\sum_{k,m\in\mathbb{Z}}m^3(k-m)\w{\mb{0}}(k-m,t)\w{\mb{0}}(m,t)e^{ikx_1}\nonumber\\
&\q +\frac{\eta}{\sqrt{2\pi}} \sum_{k,m\in\mathbb{Z}}\frac{|m|^3}{\sinh(|m|)}\pare{\w{\mh{0}}(k-m,t)-\w{\mb{0}}(k-m,t)}\w{\mh{0}}(m,t)e^{ikx_1}\nonumber \\
&\q+\frac{\eta}{\sqrt{2\pi}}\sum_{k,m\in\mathbb{Z}}|m|^3\coth(|m|)\pare{\w{\mh{0}}(k-m,t)-\w{\mb{0}}(k-m,t)}\w{\mb{0}}(m,t)e^{ikx_1} \nonumber\\
&\q-\mP{1},_2(x_1,-1,t)+\theta\mS{1}(x_1,-1,t)+K_1^{(1)}(x_1,t),\label{eq:b1-preP1yS1}
\end{align}
with $K_1^{(1)}(x_1,t)$ given by
\begin{align*}%\label{eq:K11}
K_1^{(1)}(x_1,t)&=\frac{\rho}{12} e^{-Nt}\b_2(t) \pare{\mh{0}(x_1,t)-\mb{0}(x_1,t)}.
\end{align*}

We now turn our attention to $\mB{1}(x,t)$ and $\mS{1}(x,t)$. The ODE for $\mB{1}$ follows from \eqref{eq:bT-adim-LL-hb-fixed-1} -  \eqref{eq:jumpb-adim-LL-hb-fixed-1}. We use the explicit expression of $\mB{0}$ in \eqref{eq:B0} to rewrite  \eqref{eq:bT-adim-LL-hb-fixed-1} as
\begin{align}
\pat \mB{1}+N\mB{1}&=  \mB{0},_{22}+ \pare{(x_2+1)\pat\mh{0}-x_2\pat\mb{0}} \mB{0},_2&&\nonumber\\
& = -c_{\m{B}}e^{-Nt}\bra{ 2+\pare{(x_2+1)\pat\mh{0}-x_2\pat\mb{0}} (2x_2+1)}
&&\text{ in }\Omega_T\times(0,T).\label{eq:bT-adim-LL-hb-fixed-phase1}
\end{align}
We define
\begin{equation*}\label{eq:delta}
\diff{f(t)}=f(t)-f(0),
\end{equation*}
and we solve \eqref{eq:bT-adim-LL-hb-fixed-phase1}, obtaining
\begin{equation}
\mB{1}(x,t)=  -c_{\m{B}}e^{-Nt}\bra{ 2t+\pare{(x_2+1) \diff{\mh{0}(x_1,t)}-x_2 \diff{\mb{0}(x_1,t)}}(2x_2+1)}.\label{eq:B1}
\end{equation}

Similarly, we use  the expressions of $\mB{0},\, \mS{0}$ in  \eqref{eq:B0} - \eqref{eq:S0}   in the evolution equation of $\mS{1}$ \eqref{eq:siT-adim-LL-hb-fixed-1} to rewrite it as
\begin{align*}
&\pat \mS{1}+N\mS{1}\nonumber&&\\
&=-N\mB{1}+ \mS{0},_{22}+ \pare{(x_2+1)\pat\mh{0}-x_2\pat\mb{0}}\mS{0},_2 \nonumber&&\\
&=N  c_{\m{B}}e^{-Nt}\bra{ 2t+\pare{(x_2+1) \diff{\mh{0}(x_1,t)}-x_2 \diff{\mb{0}(x_1,t)}}(2x_2+1)}\nonumber&&\\
&\q-2e^{-Nt}\b_1(t) -e^{-Nt} \b_1(t)\pare{(x_2+1)\pat\mh{0}-x_2\pat\mb{0}} (2x_2+1)
&&\text{ in }\Omega_T\times(0,T),%\label{eq:siT-adim-LL-hb-fixed-phase1}
\end{align*}
so
\begin{align}\label{eq:S1}
\mS{1}(x,t)
&=-e^{-Nt}\b_1(t)\pare{(2x_2+1)\pare{(x_2+1)\diff{\mh{0}(x_1,t)}-x_2\diff{\mb{0}(x_1,t)}}+2t}.
\end{align}

We can now compute the term $\mS{1},_2$ contained in both \eqref{eq:h1-preP1yS1} and  \eqref{eq:b1-preP1yS1}, obtaining that
\begin{align*}
\mS{1},_2(x,t)
&=-2e^{-Nt}\b_1(t)\pare{(x_2+1)\diff{\mh{0}(x_1,t)}-x_2\diff{\mb{0}(x_1,t)}}\\
&\q-e^{-Nt}\b_1(t)\pare{(2x_2+1)\pare{\diff{\mh{0}(x_1,t)}-\diff{\mb{0}(x_1,t)}}},
\end{align*}
from which their values on the boundaries are
\begin{align}
\mS{1},_2(x_1,0,t)
&=-3e^{-Nt}\b_1(t)\diff{\mh{0}(x_1,t)}+e^{-Nt}\b_1(t)\diff{\mb{0}(x_1,t)},\label{eq:S1-der2-top}\\
\mS{1},_2(x_1,-1,t)
&=-3e^{-Nt}\b_1(t)\diff{\mb{0}(x_1,t)}+e^{-Nt}\b_1(t)\diff{\mh{0}(x_1,t)}.\label{eq:S1-der2-bot}
\end{align}
The equalities \eqref{eq:S1-der2-top} and \eqref{eq:S1-der2-bot} can now be inserted into \eqref{eq:h1-preP1yS1} and \eqref{eq:b1-preP1yS1}.

Then, we are left with the term $\mP{1},_2$ to complete the above equations. We apply  Lemma \ref{lem:sol-ell2}  to  \eqref{eq:divu-adim-LL-hb-fixed-1}, \eqref{eq:tp-adim-LL-hb-fixed-1}, and \eqref{eq:bp-adim-LL-hb-fixed-1}, with $g_1=-\eta\mh{1},_{11}$, $g_2=\eta\mb{1},_{11}$, and $\w{w}=\w{w^{(1)}}$ given by
\begin{align*}
\w{w^{(1)}}(k,x_2,t)
&=\theta\w{\D \mS{1}}(k,x_2,t)-\rho\pare{\w{\mS{1}}(k,x_2,t)-\tau \w{\mB{1}}(k,x_2,t)}+\reallywidehat{\mQ{0}}(k,x_2,t)-\theta\reallywidehat{\mZ{0}}(k,x_2,t),
%\label{eq:w1-P1}
\end{align*}
so we  obtain that
 \begin{align*}
 \mP{1}(x,t)
&=\frac{1}{\sqrt{2\pi}}\sum_{k\in\mathbb{Z}}
\left\{
 \eta  |k|^2 \frac{\sinh(|k|(x_2+1))}{\sinh(|k|)}
\w{\mh{1}}(k,t)+\eta|k|^2\frac{\sinh(|k|x_2)}{\sinh(|k|)}\w{\mb{1}}(k,t)\right.\nonumber \\
&\left.\qq +\frac{\sinh(|k|x_2)}{\sinh(|k|)} \frac{1}{|k|}\int_{-1}^0\sinh(|k|(1+y_2))\w{w^{(1)}}(k,y_2,t)\,dy_2\right. \nonumber\\
&\left.\qq + \frac{1}{|k|}\int_{0}^{x_2}\sinh(|k|(x_2-y_2))\w{w^{(1)}}(k,y_2,t)\,dy_2
\right\}e^{ikx_1}.%\label{eq:P1}
\end{align*}
 The expression of $\w{w^{(1)}}$ in terms of the initial follows from the expressions of $\mS{0}$ \eqref{eq:S0}, of $\mP{0}$ \eqref{eq:P0}, of $\mS{1}$ \eqref{eq:S1}, of $\mB{1}$ \eqref{eq:B1}, of $\mQ{0}$ \eqref{eq:Q0}, and of $\mZ{0}$ \eqref{eq:Z0}.\\
In the next computations, we will need to evaluate in $x_2=0$ and $x_2=-1$ the following derivative
 \begin{align*}
\mP{1},_2(x,t)
&=\frac{1}{\sqrt{2\pi}}\sum_{k\in\mathbb{Z}}
\left\{
\eta  |k|^3 \frac{\cosh(|k|(x_2+1))}{\sinh(|k|)}
\w{\mh{1}}(k,t)+\eta|k|^3\frac{\cosh(|k|x_2)}{\sinh(|k|)}\w{\mb{1}}(k,t)\right.\nonumber \\
&\left.\qq +\frac{\cosh(|k|x_2)}{\sinh(|k|)} \int_{-1}^0\sinh(|k|(1+y_2))\w{w^{(1)}}(k,y_2,t)\,dy_2\right. \nonumber\\
&\left.\qq + \int_{0}^{x_2}\cosh(|k|(x_2-y_2))\w{w^{(1)}}(k,y_2,t)\,dy_2
\right\}e^{ikx_1}.%\label{eq:P1-der2}
\end{align*}
After a lengthy computation (that we omit for the sake of readability) we find that 
\begin{align}
&\pat \mh{1}(x_1,t)\nonumber\\
&= -\eta  \pare{\Lambda \cth{\Lambda}\Lambda^2\mh{1}(x_1,t)
+\frac{\Lambda}{\sh{\Lambda}}\Lambda^2\mb{1}(x_1,t)}\nonumber \\
&\q+ \frac{\eta}{\sqrt{2\pi}} \sum_{k,m\in\mathbb{Z}}\w{\mh{0}}(k-m,t)\w{\mh{0}}(m,t)|m|^3\av{k} \pare{ \coth(|m|)\cth{\av{k}}-\frac{mk}{\av{k}|m|}}e^{ikx_1}\nonumber\\
&\q+\frac{\eta}{\sqrt{2\pi}}\sum_{k,m\in\mathbb{Z}}\frac{\cth{\av{k}}}{\sh{|m|}}|m|^3\av{k} \w{\mh{0}}(k-m,t) \w{\mb{0}}(m,t)e^{ikx_1}\nonumber\\
&\q -\frac{\eta}{\sqrt{2\pi}} \sum_{k,m\in\mathbb{Z}}\frac{|m|^3\av{k}}{\sh{\av{k}}\sh{\av{m}}}   \w{\mb{0}}(k-m,t) \w{\mh{0}}(m,t)e^{ikx_1}\nonumber\\
&\q-\frac{\eta}{\sqrt{2\pi}}\sum_{k,m\in\mathbb{Z}}\frac{\cth{\av{m}}}{\sh{|k|}}|m|^3\av{k}\w{\mb{0}}(k-m,t)\w{\mb{0}}(m,t)e^{ikx_1}\nonumber\\
&\q-\frac{1}{\sqrt{2\pi}}\sum_{k\in\ZZ}\bra{
\theta\b_1(t)e^{-Nt}
+\frac{\rho}{12}\b_2(t)e^{-Nt}
}\av{k}\cth{\av{k}}\w{\mh{0}}(k,t)e^{ikx_1}\nonumber\\
&\q-\frac{1}{\sqrt{2\pi}}\sum_{k\in\ZZ}\bra{
\theta\b_1(t)e^{-Nt}+
\frac{\rho}{12}\b_2(t)e^{-Nt}
}\frac{\av{k}}{\sh{\av{k}}}\w{\mb{0}}(k,t)e^{ikx_1}\nonumber\\
&\q+\frac{1}{\sqrt{2\pi}}\sum_{k\in\ZZ}\bra{
\theta e^{-Nt}\b_1(t)\av{k}\cth{\av{k}}
+\rho e^{-Nt}\b_2(t)\pare{\frac{\av{k}\cth{\av{k}}-3}{k^2}+4\frac{\ch{\av{k}}-1}{\av{k}^3\sh{\av{k}}}}
}\w{\mh{0}}(k,0)e^{ikx_1}\nonumber\\
&\q-\frac{1}{\sqrt{2\pi}}\sum_{k\in\ZZ}\bra{
-\theta e^{-Nt}\b_1(t)\frac{\av{k}}{\sh{\av{k}}}
+\rho e^{-Nt}\b_2(t)\pare{-\frac{\sh{\av{k}}+\av{k}}{k^2\sh{\av{k}}}+4\frac{\ch{\av{k}}-1}{\av{k}^3\sh{\av{k}}}}
}\w{\mb{0}}(k,0)e^{ikx_1}\nonumber\\
&\q-2 \rho e^{-Nt}\b_2(t)t\frac{1}{\sqrt{2\pi}}\sum_{k\in\ZZ}\frac{\ch{\av{k}}-1}{\av{k}\sh{\av{k}}}e^{ikx_1},\label{eq:h1}
\end{align}
and
\begin{align}
&\pat \mb{1}(x_1,t)\nonumber\\
&=-\eta  \bra{\frac{\Lambda}{\sh{\Lambda}}\Lambda^2\mh{1}(x_1,t)
+\Lambda\cth{\Lambda}\Lambda^2\mb{1}(x_1,t)}\nonumber \\
&\q+\frac{\eta}{\sqrt{2\pi}} \sum_{k,m\in\mathbb{Z}}\frac{\cth{|m|}}{\sh{\av{k}}}\av{k}\av{m}^3\w{\mh{0}}(k-m,t)\w{\mh{0}}(m,t)e^{ikx_1}\nonumber\\
&\q+\frac{\eta}{\sqrt{2\pi}}\sum_{k,m\in\mathbb{Z}}\frac{|k||m|^3}{\sh{\av{k}}\sh{|m|}}\w{\mh{0}}(k-m,t)\w{\mb{0}}(m,t)e^{ikx_1} \nonumber\\
&\q -\frac{\eta}{\sqrt{2\pi}} \sum_{k,m\in\mathbb{Z}}\frac{\cth{\av{k}}}{\sh{|m|}}\av{k}\av{m}^3 \w{\mb{0}}(k-m,t)\w{\mh{0}}(m,t)e^{ikx_1}\nonumber \\
&\q+\frac{\eta}{\sqrt{2\pi}}\sum_{k,m\in\mathbb{Z}}\pare{\frac{mk}{\av{m}\av{k}}-\cth{\av{m}}\cth{\av{k} }}\av{m}^3\av{k}\w{\mb{0}}(k-m,t)\w{\mb{0}}(m,t)e^{ikx_1}\nonumber\\
&\q
-\frac{1}{\sqrt{2\pi}}\sum_{k\in\ZZ}\bra{
\theta e^{-Nt}\b_1(t)
+
\frac{\rho}{12} e^{-Nt}\b_2(t)
}\frac{\av{k}}{\sh{\av{k}}}
\w{\mh{0}}(k,t)e^{ikx_1}\nonumber\\
&\q-\frac{1}{\sqrt{2\pi}}\sum_{k\in\ZZ}\bra{
\theta e^{-Nt}\b_1(t)
+
\frac{\rho}{12} e^{-Nt}\b_2(t)
}\av{k}\cth{\av{k}}
\w{\mb{0}}(k,t)e^{ikx_1}\nonumber\\
&\q -\frac{1}{\sqrt{2\pi}}\sum_{k\in\ZZ}\bra{
-\theta e^{-Nt}\b_1(t)
\frac{\av{k}}{\sh{\av{k}}}
+
\rho e^{-Nt}\b_2(t)\pare{4\frac{\cth{\av{k}}-1}{\av{k}^3\sh{\av{k}}}-\frac{1}{k^2}-\frac{1}{\av{k}\sh{\av{k}}}}}
\w{\mh{0}}(k,0)e^{ikx_1}\nonumber\\
&\q-\frac{1}{\sqrt{2\pi}}\sum_{k\in\ZZ}\bra{
-\theta e^{-Nt}\b_1(t)\av{k}\cth{\av{k}}
+
\rho e^{-Nt}\b_2(t)\pare{\frac{4-\ch{\av{k}}}{\av{k}^3\sh{\av{k}}}+\frac{3}{k^2}-\frac{\cth{\av{k}}}{\av{k}}}
}
\w{\mb{0}}(k,0)e^{ikx_1}\nonumber\\
&\q -2\rho e^{-Nt}\b_2(t)t\frac{1}{\sqrt{2\pi}}\sum_{k\in\ZZ}\frac{1-\ch{\av{k}}}{\av{k}\sh{\av{k}}}e^{ikx_1} .\label{eq:b1}
\end{align}

\section{The asymptotic model of a multilayer tumor with two free boundaries}
We define the operators
\begin{equation*}\label{eq:O1O22}
\Theta_1=\Lambda \coth(\Lambda)\qq\t{and}\qq \Theta_2=\frac{\Lambda}{\sh{\Lambda}},
\end{equation*}
%and we use  that $\Lambda^2\mh{0}=-\mh{0},_{11}$,
so we rewrite asymptotic equations of $\pat\mh{0}$ and $\pat\mb{0}$ in \eqref{eq:ht0} and \eqref{eq:bt0} as
\begin{align*}
\pat h^{(0)}(x_1,t)
&=-\eta\pare{\Theta_1\Lambda^2\mh{0}(x_1,t)+\Theta_2\Lambda^2\mb{0}(x_1,t)}+K^{(0)}(x_1,t),%\label{eq:path0}
\end{align*}
and
\begin{align*}
\pat\mb{0}(x_1,t)
&=-\eta\pare{
\Theta_2\Lambda^2\mh{0}(x_1,t)
+\Theta_1\Lambda^2\mb{0}(x_1,t)
}-K^{(0)}(x_1,t),%\label{eq:patb0}
\end{align*}
with $K^{(0)}$ as in \eqref{eq:K0}, i.e.
\[
K^{(0)}(x_1,t)
=\frac{\rho}{12}\b_2(t)e^{-Nt};
\]
and the asymptotic equations of $\pat\mh{1}$ and $\pat\mb{1}$ in \eqref{eq:h1} and \eqref{eq:b1} as
\begin{align*}%\label{eq:path1}
\pat\mh{1}&=-\eta \Lambda(\sinh(\Lambda))^{-1}\bra{\cosh(\Lambda)\Lambda^2\mh{1}(x_1,t)+\Lambda^2\mb{1}(x_1,t)}\nonumber\\
&\q+\eta
\Theta_1\pare{\mh{0}(x_1,t)\Theta_1\Lambda^2\mh{0}(x_1,t)}
+\eta
\Theta_1\pare{\mh{0}(x_1,t)\Theta_2\Lambda^2\mb{0}(x_1,t)}\nonumber\\
&\q+\eta\pare{\mh{0}(x_1,t)\Lambda^2\mh{0}(x_1,t),_1},_1\nonumber\\
&\q -\eta\Theta_2\pare{\mb{0}(x_1,t)\Theta_2\Lambda^2\mh{0}(x_1,t)}-\eta
\Theta_2\pare{\mb{0}(x_1,t)\Theta_1\Lambda^2\mb{0}(x_1,t)}\nonumber
\\
&\q -\bra{
\theta\b_1(t)e^{-Nt}+
\frac{\rho}{12}\b_2(t)e^{-Nt}
}
\pare{\Theta_1 \mh{0}(x_1,t)+\Theta_2 \mb{0}(x_1,t)}
\nonumber\\
&\q +K^{(1)}(x_1,t),
\end{align*}
and
\begin{align*}%\label{eq:patb1}
\pat \mb{1}
&=-\eta\pare{\frac{\Lambda}{\sh{\Lambda}}\Lambda^2\mh{1}(x_1,t)+\Lambda\coth(\Lambda)\Lambda^2\mb{1}(x_1,t)}\nonumber\\
&\q+ \eta\Theta_2\pare{\mh{0}(x_1,t)\Theta_1\Lambda^2\mh{0}(x_1,t)}+\eta\Theta_2\pare{\mh{0}(x_1,t)\Theta_2\Lambda^2\mb{0}(x_1,t)}\nonumber\\
&\q-\eta\Theta_1\pare{\mb{0}(x_1,t)\Theta_2\Lambda^2\mh{0}(x_1,t)}-\eta\Theta_1\pare{\mb{0}(x_1,t)\Theta_1\Lambda^2\mb{0}(x_1,t)} \nonumber\\
&\q -\eta \pare{\mb{0}(x_1,t)\Lambda^2\mb{0},_1(x_1,t)},_1\nonumber\\
&\q -\bra{
\theta e^{-Nt}\b_1(t)
+
\frac{\rho}{12} e^{-Nt}\b_2(t)
}\pare{\Theta_2 \mh{0}(x_1,t)+\Theta_1 \mb{0}(x_1,t)}\nonumber\\
&\q +J^{(1)}(x_1,t),
\end{align*}
with
\begin{align*}
&K^{(1)}(x_1,t)\nonumber\\
&=\frac{1}{\sqrt{2\pi}}\sum_{k\in\ZZ}\bra{
\theta e^{-Nt}\b_1(t)\av{k}\cth{\av{k}}
+\rho e^{-Nt}\b_2(t)\pare{\frac{\av{k}\cth{\av{k}}-3}{k^2}+4\frac{\ch{\av{k}}-1}{\av{k}^3\sh{\av{k}}}}
}\w{\mh{0}}(k,0)e^{ikx_1}\nonumber\\
&\q-\frac{1}{\sqrt{2\pi}}\sum_{k\in\ZZ}\bra{
-\theta e^{-Nt}\b_1(t)\frac{\av{k}}{\sh{\av{k}}}
+\rho e^{-Nt}\b_2(t)\pare{-\frac{\sh{\av{k}}+\av{k}}{k^2\sh{\av{k}}}+4\frac{\ch{\av{k}}-1}{\av{k}^3\sh{\av{k}}}}
}\w{\mb{0}}(k,0)e^{ikx_1}\nonumber\\
&\q-2 \rho e^{-Nt}\b_2(t)t\frac{1}{\sqrt{2\pi}}\sum_{k\in\ZZ}\frac{\ch{\av{k}}-1}{\av{k}\sh{\av{k}}}e^{ikx_1},%\label{eq:K}
\\
&J^{(1)}(x_1,t)\nonumber\\
&=-\frac{1}{\sqrt{2\pi}}\sum_{k\in\ZZ}\bra{
-\theta e^{-Nt}\b_1(t)
\frac{\av{k}}{\sh{\av{k}}}
+
\rho e^{-Nt}\b_2(t)\pare{4\frac{\cth{\av{k}}-1}{\av{k}^3\sh{\av{k}}}-\frac{1}{k^2}-\frac{1}{\av{k}\sh{\av{k}}}}}
\w{\mh{0}}(k,0)e^{ikx_1}\nonumber\\
&\q-\frac{1}{\sqrt{2\pi}}\sum_{k\in\ZZ}\bra{
-\theta e^{-Nt}\b_1(t)\av{k}\cth{\av{k}}
+
\rho e^{-Nt}\b_2(t)\pare{\frac{4-\ch{\av{k}}}{\av{k}^3\sh{\av{k}}}+\frac{3}{k^2}-\frac{\cth{\av{k}}}{\av{k}}}
}
\w{\mb{0}}(k,0)e^{ikx_1}\nonumber\\
&\q -2\rho e^{-Nt}\b_2(t)t\frac{1}{\sqrt{2\pi}}\sum_{k\in\ZZ}\frac{1-\ch{\av{k}}}{\av{k}\sh{\av{k}}}e^{ikx_1} .%\label{eq:J}
\end{align*}

We set
\[
U(x_1,t)=\mh{0}(x_1,t)+\eps\mh{1}(x_1,t)
\qq\t{and}\qq
V(x_1,t)=\mb{0}(x_1,t)+\eps\mb{1}(x_1,t).
\]
Then, the $U$ and $V$ satisfy the following system of equations up to a $O(\varepsilon^2)$ correction term
\begin{align}\label{eq:patU2}
\pat U(x_1,t)
&=-\eta\bra{\Theta_1 \pare{\Lambda^2 U(x_1,t)}+\Theta_2 \pare{\Lambda^2 V(x_1,t) }}\nonumber\\
&\q+\eps\eta
\Theta_1\pare{U(x_1,t)\Theta_1\Lambda^2U(x_1,t)}
+\eps\eta
\Theta_1\pare{U(x_1,t)\Theta_2\Lambda^2V(x_1,t)}\nonumber\\
&\q+\eps\eta\pare{U(x_1,t)\Lambda^2U(x_1,t),_1},_1\nonumber\\
&\q -\eps\eta\Theta_2\pare{V(x_1,t)\Theta_2\Lambda^2U(x_1,t)}-\eps\eta
\Theta_2\pare{V(x_1,t)\Theta_1\Lambda^2V(x_1,t)}\nonumber
\\
&\q -\eps\bra{
\theta\b_1(t)e^{-Nt}+
\frac{\rho}{12}\b_2(t)e^{-Nt}
}
\pare{\Theta_1 U(x_1,t)+\Theta_2 V(x_1,t)}\nonumber\\
&\q +K^{(0)}(x_1,t)+\eps  K^{(1)}(x_1,t).
\end{align}
and
\begin{align}\label{eq:patV2}
\pat V(x_1,t)
&=-\eta\bra{
\Theta_2\pare{\Lambda^2U(x_1,t)}+\Theta_1\pare{\Lambda^2V(x_1,t)}
}\nonumber\\
&\q+ \eps\eta\Theta_2\pare{U(x_1,t)\Theta_1\Lambda^2U(x_1,t)}+\eps\eta\Theta_2\pare{U(x_1,t)\Theta_2\Lambda^2V(x_1,t)}\nonumber\\
&\q-\eps\eta\Theta_1\pare{V(x_1,t)\Theta_2\Lambda^2U(x_1,t)}-\eps\eta\Theta_1\pare{V(x_1,t)\Theta_1\Lambda^2V(x_1,t)} \nonumber\\
&\q -\eps\eta \pare{V(x_1,t)\Lambda^2V,_1(x_1,t)},_1\nonumber\\
&\q -\eps\bra{
\theta e^{-Nt}\b_1(t)
+
\frac{\rho}{12} e^{-Nt}\b_2(t)
}\pare{\Theta_2 U(x_1,t)+\Theta_1 V(x_1,t)}\nonumber\\
&\q -K^{(0)}(x_1,t)+\eps J^{(1)}(x_1,t).
\end{align}

\section{An existence result}
In this section we consider the following simplified system
\begin{align}\label{eq:patU_Sim}
\pat U(x_1,t)
&=-\eta\bra{\Theta_1 \pare{\Lambda^2 U(x_1,t)}+\Theta_2 \pare{\Lambda^2 V(x_1,t) }}\nonumber\\
&\q+\eps\eta
\Theta_1\pare{U(x_1,t)\Theta_1\Lambda^2U(x_1,t)}
+\eps\eta
\Theta_1\pare{U(x_1,t)\Theta_2\Lambda^2V(x_1,t)}\nonumber\\
&\q+\eps\eta\pare{U(x_1,t)\Lambda^2U(x_1,t),_1},_1\nonumber\\
&\q -\eps\eta\Theta_2\pare{V(x_1,t)\Theta_2\Lambda^2U(x_1,t)}-\eps\eta
\Theta_2\pare{V(x_1,t)\Theta_1\Lambda^2V(x_1,t)}\nonumber
\\
&\q -\eps\bra{
\theta\b_1(t)e^{-Nt}+
\frac{\rho}{12}\b_2(t)e^{-Nt}
}
\pare{\Theta_1 U(x_1,t)+\Theta_2 V(x_1,t)}.
\end{align}
and
\begin{align}\label{eq:patV_Sim}
\pat V(x_1,t)
&=-\eta\bra{
\Theta_2\pare{\Lambda^2U(x_1,t)}+\Theta_1\pare{\Lambda^2V(x_1,t)}
}\nonumber\\
&\q+ \eps\eta\Theta_2\pare{U(x_1,t)\Theta_1\Lambda^2U(x_1,t)}+\eps\eta\Theta_2\pare{U(x_1,t)\Theta_2\Lambda^2V(x_1,t)}\nonumber\\
&\q-\eps\eta\Theta_1\pare{V(x_1,t)\Theta_2\Lambda^2U(x_1,t)}-\eps\eta\Theta_1\pare{V(x_1,t)\Theta_1\Lambda^2V(x_1,t)} \nonumber\\
&\q -\eps\eta \pare{V(x_1,t)\Lambda^2V,_1(x_1,t)},_1\nonumber\\
&\q -\eps\bra{
\theta e^{-Nt}\b_1(t)
+
\frac{\rho}{12} e^{-Nt}\b_2(t)
}\pare{\Theta_2 U(x_1,t)+\Theta_1 V(x_1,t)},
\end{align}
{\it i.e.} we set $K^{(0)}=K^{(1)}=J^{(1)}=0$ in \eqref{eq:patU2} and \eqref{eq:patV2}.

For a fixed integer $j$, Let us introduce the Wiener space
$$
A^j=\set{u:\q \w{\partial_x^j u}\in \ell^1,\q j\ge0}
$$
with norm
$$
\|u\|_{A^j}=\norm{\w{\partial_x^j u}}_{\ell^1}
$$
These spaces form a Banach algebra. Equipped with these spaces we can prove the following result:
\begin{theorem}
Let $(U_0,V_0)\in A^1$ be the zero mean initial data. Assume that
$$
\|U_0\|_{A^0}+\|V_0\|_{A^0}+\|U_0\|_{A^1}+\|V_0\|_{A^1}\leq C.
$$
Then, there exists a unique solution of \eqref{eq:patU_Sim}-\eqref{eq:patV_Sim}
$$
(U,V)\in C([0,T];A^1\times A^1)\cap L^1(0,T;A^4\times A^4),
$$
for certain $0<T=T(\|U_0\|_{A^0}+\|V_0\|_{A^0}+\|U_0\|_{A^1}+\|V_0\|_{A^1})$.
\end{theorem}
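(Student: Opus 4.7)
The strategy is a Fourier-based a priori estimate followed by Picard iteration in the space $X=C([0,T];A^1\times A^1)\cap L^1(0,T;A^4\times A^4)$. First I would diagonalize the linear principal part by setting $W=U+V$ and $Y=U-V$; since $\Theta_1+\Theta_2=\Lambda\cth{\Lambda/2}$ and $\Theta_1-\Theta_2=\Lambda\th{\Lambda/2}$, the linear operator decouples with Fourier symbols bounded below by $c|k|^3$ for $|k|\geq 1$. The zero-mean hypothesis kills the $k=0$ modes, so the associated semigroups enjoy the parabolic smoothing $\int_0^T\|e^{-tL}f\|_{A^{j+3}}\,dt\lesssim\|f\|_{A^j}$. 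Applying the triangle inequality to the Duhamel formula at the Fourier level and summing with the weight $|k|$ yields the backbone inequality
\[
\tfrac{d}{dt}\bigl(\|U\|_{A^1}+\|V\|_{A^1}\bigr)+c_\eta\bigl(\|U\|_{A^4}+\|V\|_{A^4}\bigr)\leq\|N(U,V)\|_{A^1}.
\]

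The crux is a nonlinear bound showing $\int_0^T\|N(U,V)\|_{A^1}\,dt$ is controlled by $\|(U,V)\|_{L^\infty_t A^1}$, $\|(U,V)\|_{L^1_t A^4}$ and a positive power of $T$, without any $A^5$-contribution (which the parabolic gain from $A^1$ cannot reach). A naive Wiener--Leibniz estimate on the worst term $\eps\eta\,\Theta_1(U\,\Theta_1\Lambda^2 U)$ does produce a forbidden $\|U\|_{A^0}\|U\|_{A^5}$ term, so the key is to exploit the explicit quadratic symbol $|m|^3|k|\bigl[\cth{|m|}\cth{|k|}-mk/(|k||m|)\bigr]$ recorded in \eqref{eq:h1}, case-split on $\sgn(mk)$. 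When the signs agree, $\cth{|m|}\cth{|k|}-1=O(e^{-|m|}+e^{-|k|})$ for $|m|,|k|\geq 1$ absorbs any polynomial weight and bounds the contribution by products of low $A^j$ norms. When the signs differ, $|k-m|=|k|+|m|$, so $|k|^2|m|^3\leq |k-m|^2|m|^3$, yielding a bound $\|U\|_{A^2}\|U\|_{A^3}$; the Wiener interpolation $\|U\|_{A^\gamma}\leq\|U\|_{A^\alpha}^\theta\|U\|_{A^\beta}^{1-\theta}$ with $\gamma=\theta\alpha+(1-\theta)\beta$ (a direct consequence of H\"older in weighted $\ell^1$) then turns this into $\|U\|_{A^1}\|U\|_{A^4}$, which is in $L^1_t$. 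All other quadratic terms either factor through an outer $\Theta_2$ or an inner $\Theta_2\Lambda^2$, both of which gain exponential smoothing because $|k|^{1+j}/\sh{|k|}\in\ell^\infty_k$ for every $j\geq 0$, and are easily absorbed into lower-order contributions. The local term $\eps\eta(U\Lambda^2U,_1),_1$ is handled by the symmetric split $|k|^2\leq 2(|k-m|^2+|m|^2)$ together with the same interpolation.

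Plugging this nonlinear bound into the energy inequality and using H\"older in time produces a closed inequality of the form
\[
\|(U,V)\|_{L^\infty_t A^1}+c_\eta\|(U,V)\|_{L^1_t A^4}\leq\|(U_0,V_0)\|_{A^1}+\eps\,\tilde C\bigl(\|(U,V)\|_{L^\infty_t(A^0\cap A^1)}\bigr)\,T^\kappa\bigl(1+\|(U,V)\|_{L^1_t A^4}\bigr),
\]
for some $\kappa>0$. A standard continuation argument then produces $T=T(\|(U_0,V_0)\|_{A^0\cap A^1})>0$ on which the $A^4$ term is absorbed on the left-hand side and the whole $X$-norm stays uniformly bounded. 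Existence follows by Picard iteration on $X$ via the Duhamel formula (the same estimate applied to successive differences provides contraction for $T$ small), while uniqueness is obtained by running the analogous differential inequality on the difference of two solutions and invoking Gr\"onwall. The central obstacle, and the reason the $A^1\cap A^4$ framework closes at all, is the sign-structured cancellation in the worst quadratic symbol of the $\Theta_1(U\,\Theta_1\Lambda^2U)$-type term; this must be tracked uniformly term by term across all five nonlinearities in \eqref{eq:patU_Sim}--\eqref{eq:patV_Sim} to rule out any $A^5$ contribution.
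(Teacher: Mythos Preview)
Your core strategy is the same as the paper's: exploit the sign-structured cancellation in the quadratic Fourier symbol to avoid any $A^5$ contribution, then close via the parabolic gain from $A^1$ to $A^4$. The cosmetic differences---your diagonalization $W=U+V$, $Y=U-V$ and Duhamel/Picard framework versus the paper's direct differential inequality on $E^j(t)=\|U(t)\|_{A^j}+\|V(t)\|_{A^j}$ (where the linear cross-term $\Theta_2\Lambda^2 V$ is simply bounded by $c\|V\|_{A^0}$ thanks to the smoothing of $\Theta_2$, so no diagonalization is needed)---are harmless.

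There is, however, one concrete confusion that creates a gap. The symbol
\[
|m|^3|k|\bigl[\cth{|m|}\cth{|k|}-mk/(|m||k|)\bigr]
\]
you cite from \eqref{eq:h1} is \emph{not} the symbol of $\Theta_1(U\,\Theta_1\Lambda^2 U)$ alone: the subtracted piece $-\bigl(mk/(|m||k|)\bigr)\,|m|^3|k|=-km^3$ is precisely the contribution of the local term $(U\Lambda^2 U_{,1})_{,1}$. So your (correct) sign case-split already handles these two terms \emph{together}. The paper makes this explicit by grouping them from the outset as
\[
NL_1=\bigl\|\Theta_1(U\,\Theta_1\Lambda^2 U)+(U\Lambda^2 U_{,1})_{,1}\bigr\|_{A^0}
\]
and observing that the combined multiplier $|k||k-m|^3-k(k-m)^3$ vanishes unless $\sgn(k)\neq\sgn(k-m)$, in which case $|k|\le|m|$. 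Your subsequent claim that $(U\Lambda^2 U_{,1})_{,1}$ can be handled \emph{separately} by the split $|k|^2\le 2(|k-m|^2+|m|^2)$ is false: the $|m|^2$ piece leaves $\|U\|_{A^0}\|U\|_{A^5}$, which no interpolation between $A^1$ and $A^4$ can absorb. Once you drop that redundant sentence and recognize that the symbol from \eqref{eq:h1} already packages both dangerous terms, your argument closes exactly as you describe.
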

\begin{proof}
We define the energies
$$
E^j(t)=\|U(t)\|_{A^j}+\|V(t)\|_{A^j},\q j\ge0.
$$
As the existence of approximate solutions can be done as in our companion paper \citep{granero2025nonlocal}, we focus only on the energy estimates. Without loss of generality we fix $\eps=\eta=1$.

We start observing that there are a number of regularizing operators in the system of PDEs. Indeed
$$
\coth(|k|)=1+\frac{2}{e^{2|k|}-1} \qq\text{and}\qq\Theta_2\Lambda^n\leq C_n.
$$
We start estimating the $E^0$ energy. We have the following terms corresponding to the equation for $U$:
\begin{align*}
\frac{d}{dt} \|U(t)\|_{A^0}
&=-\|\Theta_1 \pare{\Lambda^2 U(t)}\|_{A^0}+c\|V(t)\|_{A^0}\nonumber\\
&\q+\|
\Theta_1\pare{U(t)\Theta_1\Lambda^2U(t)}+\pare{U(t)\Lambda^2U(t),_1},_1\|_{A^0}\nonumber\\
&\q+
\|\Theta_1\pare{U(t)\Theta_2\Lambda^2V(t)}\|_{A^0}\nonumber\\
&\q +\|\Theta_2\pare{V(t)\Theta_2\Lambda^2U(t)}\|_{A^0}+\|
\Theta_2\pare{V(t)\Theta_1\Lambda^2V(t)}\|_{A^0}.
\end{align*}
Using the previous identity for $\coth(\cdot)$, we find that the term
$$
NL_1(t)=\|\Theta_1\pare{U(t)\Theta_1\Lambda^2U(t)}+\pare{U(t)\Lambda^2U(t),_1},_1\|_{A^0}
$$
has a commutator structure that can be exploited. Indeed, if we pass to Fourier variables, we find the following
$$
NL_1(t)=\sum_{k,m\in\ZZ}\w{U}(k-m,t)\w{U}(m,t)\left(\coth(|k|)|k|\coth(|k-m|)|k-m|^3-k(k-m)^3\right).
$$
Thus, it can be written as
$$
NL_1(t)\leq\sum_{k,m\in\ZZ}\w{U}(k-m,t)\w{U}(m,t)\left(|k||k-m|^3-k(k-m)^3\right) +c(\|U(t)\|_{A^0}+\|U(t)\|_{A^1})\|U(t)\|_{A^3}.
$$
The multiplier
$$
 |k||k-m|^3-k(k-m)^3 
$$
vanishes unless $\text{sgn}(k)\neq\text{sgn}(k-m)$. In this case we have that either
$$
k<0<k-m\qq\text{or}\qq k-m<0<k.
$$
In both cases that implies
$$
|k|\leq |m|.
$$
As a consequence,
$$
NL_1(t)\leq c(\|U(t)\|_{A^0}+\|U(t)\|_{A^1})\|U(t)\|_{A^3}\le c\pare{E^0(t)+E^1(t)}\|U(t)\|_{A^3}.
$$
Similarly, we have that
$$
NL_2(t)=\|\Theta_1\pare{U(t)\Theta_2\Lambda^2V(t)}\|_{A^0}\leq c\|U(t)\|_{A^1}\|V(t)\|_{A^0}\le cE^0(t)E^1(t),
$$
$$
NL_3(t)=\|\Theta_2\pare{V(t)\Theta_2\Lambda^2U(t)}\|_{A^0}\leq c\|U(t)\|_{A^0}\|V(t)\|_{A^0}\le cE^0(t)^2,
$$
$$
NL_4(t)=\|\Theta_2\pare{V(t)\Theta_1\Lambda^2V(t)}\|_{A^0}\leq c\|V(t)\|_{A^0}\|V(t)\|_{A^3}\le cE^0(t)\|V(t)\|_{A^3}.
$$
Collecting every estimate we find that
$$
\frac{d}{dt}\|U(t)\|_{A^0}\leq c\pare{E^0(t)+E^1(t)}E^3(t)+c\pare{1+E^0(t)+E^1(t)}^2-\|U(t)\|_{A^3}.
$$
Following the same procedure, we find that
$$
\frac{d}{dt}\|V(t)\|_{A^0}\leq c\pare{E^0(t)+E^1(t)}E^3(t)+c\pare{1+E^0(t)+E^1(t)}^2-\|V(t)\|_{A^3}.
$$
Thus,
$$
\frac{d}{dt}E^0(t)\leq c\pare{E^0(t)+E^1(t)}E^3(t)+c\pare{1+E^0(t)+E^1(t)}^2-E^3(t).
$$

We now continue with the $E^1$ energy. The higher order term corresponds to
$$
NL_5(t)=\|\Theta_1\pare{U(t)\Theta_1\Lambda^2U(t)}+\pare{U(t)\Lambda^2U(t),_1},_1\|_{A^1},
$$
which in Fourier variables can be written as
$$
NL_5(t)=\sum_{k,m\in\ZZ}\w{U}(k-m,t)\w{U}(m,t)|k|\left(\coth(|k|)|k|\coth(|k-m|)|k-m|^3-k(k-m)^3\right).
$$
We observe that the same commutator structure appears and the bound
$$
NL_5(t)\leq c(\|U(t)\|_{A^0}+\|U(t)\|_{A^2})\|U(t)\|_{A^3}
$$
follows. Interpolating, we find that
$$
NL_5(t)\leq c(\|U(t)\|_{A^0}+\|U(t)\|_{A^1})\|U(t)\|_{A^4}.
$$
Following the previous ideas together with interpolation, we find the estimate
$$
\frac{d}{dt}E^1(t)\leq c\pare{E^0(t)+E^1(t)}E^4(t)+c\pare{1+E^0(t)+E^1(t)}^2-E^4(t).
$$
Collecting both estimates for the energies we conclude that, for small enough initial data,
$$
\frac{d}{dt}\pare{E^0(t)+E^1(t)}\leq c\pare{1+E^0(t)+E^1(t)}^2,
$$
which guarantees a uniform time of existence. The uniqueness follows from a standard contradiction argument as in \citep{granero2025nonlocal}.
\end{proof}

\appendix
\section{The solution of the Poisson equation}

\begin{lemma}\label{lem:sol-ell}
Consider the problem
\begin{align*}
\D u(x_1,x_2)&= w(x_2)&&\t{in }\mathbb{S}^1\times(-1,0),\\
u(x_1,0)&=g_1(x_1)&&\t{for }x_1\in \mathbb{S}^1,\\
u(x_1,-1)&=g_2(x_1)&&\t{for }x_1\in \mathbb{S}^1.
\end{align*}
Then
\begin{align}\label{eq:u}
u(x)&=
  x_2\int_{-1}^0(1+y_2)w(y_2)\,dy_2+\int_{0}^{x_2}(x_2-y_2)w(y_2)\,dy_2  \nonumber\\
&\q+\frac{1}{\sqrt{2\pi}}\sum_{k\in\mathbb{Z}}
\biggl\{
\frac{\sinh(|k|x_2)}{\sinh(|k|)} \av{k}\int_{-1}^0{\sinh(|k|(1+y_2))}\pare{\w{g_1}(k)+y_2\pare{\w{g_1}(k)-\w{g_2}(k)}}\,dy_2 \nonumber \\
&\qq + \av{k}\int_{0}^{x_2}\sinh(|k|(x_2-y_2))\pare{\w{g_1}(k)+y_2\pare{\w{g_1}(k)-\w{g_2}(k)}}\,dy_2\nonumber\\
& \qq+\w{g_1}(k)+x_2(\w{g_1}(k)-\w{g_2}(k))
\biggr\}e^{ikx_1}.
\end{align}
Furthermore,
\begin{equation*}
u,_1(x_1,0)=g_{1,1}(x_1),\qq
u,_1(x_1,-1)=g_{2,1}(x_1),
\end{equation*}
and
\begin{align*}
u,_2(x_1,0)&= (\sh{\Lambda})^{-1}\int_{-1}^{0}\sh{\Lambda(1+y_2)}\pare{w(y_2) +\Lambda^2\pare{g_1(x_1)+y_2\pare{g_1(x_1)-g_2(x_1)}}}\,dy_2\\
&\q +  g_1(x_1)-g_2(x_1),
\\
u,_2(x_1,-1)
&=\int_{-1}^{0}\pare{\cth{\Lambda}\sh{\Lambda(1+y_2)}-\ch{\Lambda(1+y_2)}}\pare{w(y_2) +\Lambda^2\pare{g_1(x_1)+y_2\pare{g_1(x_1)-g_2(x_1)}}}\,dy_2\\
&\q+ g_1(x_1)-g_2(x_1).
\end{align*}
\end{lemma}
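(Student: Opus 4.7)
The key idea is to reduce the mixed Dirichlet–Poisson problem to one with zero boundary data by subtracting an explicit ``affine lift''. Define
\[
u_a(x_1,x_2)=g_1(x_1)+x_2\bigl(g_1(x_1)-g_2(x_1)\bigr),
\]
which trivially satisfies $u_a(x_1,0)=g_1(x_1)$ and $u_a(x_1,-1)=g_2(x_1)$ and whose Laplacian reduces to the tangential part: $\Delta u_a = -\Lambda^2\bigl(g_1(x_1)+x_2(g_1(x_1)-g_2(x_1))\bigr)$, since $\Lambda^2=-\partial_{x_1}^2$ in Fourier. Then $v:=u-u_a$ satisfies
\[
\Delta v(x_1,x_2) = w(x_2)+\Lambda^2\bigl(g_1(x_1)+x_2(g_1(x_1)-g_2(x_1))\bigr)\qquad\text{in }\mathbb{S}^1\times(-1,0),
\]
with homogeneous Dirichlet data on both boundaries. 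This decomposition explains the three structurally distinct blocks appearing in \eqref{eq:u}: the $k=0$ particular solution coming from $w$, the non-zero modes coming from the tangential data packaged through $\hat g_1(k)+y_2(\hat g_1(k)-\hat g_2(k))$, and the explicit affine contribution $\hat g_1(k)+x_2(\hat g_1(k)-\hat g_2(k))$.

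Next I would Fourier-expand $v$ as $v(x_1,x_2)=\sum_{k\in\mathbb{Z}}\hat v_k(x_2)e^{ikx_1}$ and solve the resulting ODE $\hat v_k''(x_2)-k^2\hat v_k(x_2)=\widehat{\tilde w}_k(x_2)$ with $\hat v_k(0)=\hat v_k(-1)=0$, where $\widehat{\tilde w}_k(y_2)=w(y_2)\mathbf 1_{k=0}+k^2(\hat g_1(k)+y_2(\hat g_1(k)-\hat g_2(k)))$. For $k=0$ this is $\hat v_0''=w$ solved by direct integration, producing the first line of \eqref{eq:u}. For $k\neq 0$ I would use the variation-of-parameters particular solution $\frac{1}{|k|}\int_0^{x_2}\sinh(|k|(x_2-y_2))\widehat{\tilde w}_k(y_2)\,dy_2$ (vanishing together with its derivative at $x_2=0$) and adjust with a multiple of $\sinh(|k|x_2)$ to enforce vanishing at $x_2=-1$; the coefficient is found using $\sinh(|k|(x_2-y_2))|_{x_2=-1}=-\sinh(|k|(1+y_2))$. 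Dividing through by $k^2$ the $|k|^2$ from $\Lambda^2$ cancels one inverse $|k|$, yielding exactly the two $\sinh$-integrals appearing in \eqref{eq:u}. Adding back $u_a$ accounts for the last affine line.

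For the boundary-derivative identities I would proceed directly: $u,_1(x_1,0)=g_{1,1}(x_1)$ and $u,_1(x_1,-1)=g_{2,1}(x_1)$ follow at once by tangential differentiation of the Dirichlet trace. For the normal derivatives I would differentiate the explicit representation in $x_2$ mode by mode and evaluate. At $x_2=0$, the inner Leibniz boundary terms drop out and only the ``Dirichlet-to-Neumann'' type contribution $(\sinh\Lambda)^{-1}\int_{-1}^0\sinh(\Lambda(1+y_2))\widehat{\tilde w}_k(y_2)\,dy_2$ survives (the symbol being $\cosh(|k|\cdot 0)/\sinh(|k|)=1/\sinh|k|$), plus the derivative of $u_a$ which contributes $g_1-g_2$. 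At $x_2=-1$, both integrals survive and combine into $\coth(\Lambda)\sinh(\Lambda(1+y_2))-\cosh(\Lambda(1+y_2))$ using $\cosh|k|/\sinh|k|=\coth|k|$ and the identity $\cosh(|k|(-1-y_2))=\cosh(|k|(1+y_2))$; again $u_a$ adds $g_1-g_2$. Reassembling $\widehat{\tilde w}_k$ back into its physical-space form $w(y_2)+\Lambda^2(g_1(x_1)+y_2(g_1(x_1)-g_2(x_1)))$ gives the stated operator expressions.

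\textbf{Main obstacle.} Nothing deep is required — the argument is entirely by ODE Green's functions and Fourier bookkeeping — but two checkpoints deserve care. First, the $k=0$ mode must be treated consistently with $k\neq 0$: the limit $\lim_{k\to 0}\sinh(|k|(1+y_2))/\sinh(|k|)=1+y_2$ (respectively $\lim_{k\to 0}(\coth|k|\sinh(|k|(1+y_2))-\cosh(|k|(1+y_2)))=y_2$) shows that the compact operator-form expressions for $u,_2(x_1,0)$ and $u,_2(x_1,-1)$ absorb the $k=0$ contribution of $w$ automatically, and one should verify this matches the direct integration of the $k=0$ ODE. Second, while summing Fourier series the identification of multipliers like $|k|\coth|k|$, $|k|/\sinh|k|$, etc.\ with their Fourier-multiplier operators $\Lambda\coth(\Lambda)$, $\Lambda/\sinh(\Lambda)$ must be done with uniform definitions — which is consistent with the notation conventions fixed in the introduction.
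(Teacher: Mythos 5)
Your proposal is correct and follows essentially the same route as the paper: subtract the affine lift $g_1+x_2(g_1-g_2)$ to reduce to homogeneous Dirichlet data, solve each Fourier mode's ODE by variation of parameters with a $\sinh$ homogeneous correction (the paper imports this step from its cited Lemma A.1 of an earlier work, with $c_1=-c_2$ playing the role of your $\sinh(|k|x_2)$ adjustment), treat the $k=0$ mode via the limits $\sinh(|k|(1+y_2))/\sinh(|k|)\to 1+y_2$ and $\coth(|k|)\sinh(|k|(1+y_2))-\cosh(|k|(1+y_2))\to y_2$, and obtain the boundary derivatives by direct modewise differentiation. Your two ``checkpoints'' are exactly the points the paper handles explicitly, so no gap.
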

\begin{proof}

We set
\begin{equation}\label{eq:v}
v(x)=u(x)-\pare{g_1(x_1)+x_2(g_1(x_1)-g_2(x_1))},
\end{equation}
which satisfy the problem
\begin{align*}
\D v(x)&= w(x_2)-\pare{g_{1,11}(x_1)+x_2(g_{1,11}(x_1)-g_{2,11}(x_1))}&&\t{in }\mathbb{S}^1\times(-1,0),\\
v(x_1,0)&=0&&\t{for }x_1\in \mathbb{S}^1,\\
v(x_1,-1)&=0&&\t{for }x_1\in \mathbb{S}^1.
\end{align*}
Let
\begin{equation}\label{eq:f}
f(x)=w(x_2)-\pare{g_{1,11}(x_1)+x_2(g_{1,11}(x_1)-g_{2,11}(x_1))}.
\end{equation}
Reasoning as in  \citep[Lemma A.1]{granero2019models}, we get
\begin{align*}
\w{v}(k,x_2)&=c_1(k)e^{|k|x_2}+c_2(k)e^{-|k|x_2}+\frac{1}{2|k|}\int_{0}^{x_2}\pare{e^{|k|(x_2-y_2)}-e^{-|k|(x_2-y_2)}}\w{f}(k,y_2)\,dy_2\nonumber\\
&=c_1(k)e^{|k|x_2}+c_2(k)e^{-|k|x_2}+\frac{1}{|k|}\int_{0}^{x_2}\sh{|k|(x_2-y_2)}\w{f}(k,y_2)\,dy_2.
\end{align*}
After imposing the boundary conditions, we find that $c_1,\,c_2$ verify
\begin{align*}
c_1(k)&=-c_2(k),\\
c_2(k)&=-\frac{(\sh{|k|})^{-1}}{2|k|}\int_{-1}^{0}\sh{|k|(1+y_2)}\w{f}(k,y_2)\,dy_2.
\end{align*}
This leads to
\begin{align*}
\w{v}(k,x_2)
&=\frac{1}{\av{k}}\frac{\sh{|k|x_2}}{\sh{|k|}}\int_{-1}^{0}\sh{|k|(1+y_2)}\w{f}(k,y_2)\,dy_2+\frac{1}{|k|}\int_{0}^{x_2}\sh{|k|(x_2-y_2)}\w{f}(k,y_2)\,dy_2.
\end{align*}
The definition of $v$  in \eqref{eq:v}  implies
\begin{align*}
\w{u}(k,x_2)
&=\frac{1}{\av{k}}\frac{\sh{|k|x_2}}{\sh{|k|}}\int_{-1}^{0}\sh{|k|(1+y_2)}\w{f}(k,y_2)\,dy_2+\frac{1}{|k|}\int_{0}^{x_2}\sh{|k|(x_2-y_2)}\w{f}(k,y_2)\,dy_2\\
&\q+\w{g_1}(k)+x_2(\w{g_1}(k)-\w{g_2}(k)),
\end{align*}
from which
\begin{align*}
u(x)&=\frac{1}{\sqrt{2\pi}}\sum_{k\in\mathbb{Z}}
\biggl\{
\frac{\sinh(|k|x_2)}{\sinh(|k|)} \frac{1}{|k|}\int_{-1}^0{\sinh(|k|(1+y_2))}\pare{w(y_2)\de_k+k^2\pare{\w{g_1}(k)+y_2\pare{\w{g_1}(k)-\w{g_2}(k)}}}\,dy_2 \nonumber \\
&\qq + \frac{1}{|k|}\int_{0}^{x_2}\sinh(|k|(x_2-y_2))\pare{w(y_2)\de_k+k^2\pare{\w{g_1}(k)+y_2\pare{\w{g_1}(k)-\w{g_2}(k)}}}\,dy_2\nonumber\\
& \qq+\w{g_1}(k)+x_2(\w{g_1}(k)-\w{g_2}(k))
\biggr\}e^{ikx_1},
\end{align*}
thanks to the definition of $f$ in \eqref{eq:f}. The above expression can be simplified observing that $\de_k=0$ if $k\ne0$, and $\de_0=1$. We hence compute
\begin{align*}
\lim_{k\to0}\frac{\sinh(|k|x_2)}{\sinh(|k|)} \frac{1}{|k|}\int_{-1}^0{\sinh(|k|(1+y_2))}w(y_2)\,dy_2&=x_2\int_{-1}^0(1+y_2)w(y_2)\,dy_2,
\\
\lim_{k\to0}\frac{1}{|k|}\int_{0}^{x_2}\sinh(|k|(x_2-y_2))w(y_2)\,dy_2&=\int_{0}^{x_2}(x_2-y_2)w(y_2)\,dy_2.
\end{align*}
 We thus recover \eqref{eq:u}.

We conclude computing
\begin{align*}
u,_1(x_1,0)&=\frac{1}{\sqrt{2\pi}}\sum_{k\in\mathbb{Z}} ik\w{g_1}(k)e^{ikx_1}=g_{1,1}(x_1),
\\
u,_1(x_1,-1)&=\frac{1}{\sqrt{2\pi}}\sum_{k\in\mathbb{Z}} ik \w{g_2}(k) e^{ikx_1}=g_{2,1}(x_1),
\end{align*}
and
\begin{align*}
\w{u},_2(k,x_2)&=\int_{-1}^0(1+y_2)w(y_2)\,dy_2+\int_{0}^{x_2}w(y_2)\,dy_2\\
&\q+\frac{\ch{|k|x_2}}{\sh{|k|}}k^2\int_{-1}^0{\sinh(|k|(1+y_2))}\pare{\w{g_1}(k)+y_2\pare{\w{g_1}(k)-\w{g_2}(k)}}\,dy_2 \\
&\q+k^2\int_{0}^{x_2}\cosh(|k|(x_2-y_2))\pare{\w{g_1}(k)+y_2\pare{\w{g_1}(k)-\w{g_2}(k)}}\,dy_2\\
&\q+\w{g_1}(k)-\w{g_2}(k),
\end{align*}
from which

\begin{align*}
u,_2(x_1,0)&=\int_{-1}^0(1+y_2)w(y_2)\,dy_2\\
&\q+\frac{1}{\sh{|k|}}k^2\int_{-1}^0{\sinh(|k|(1+y_2))}\pare{\w{g_1}(k)+y_2\pare{\w{g_1}(k)-\w{g_2}(k)}}\,dy_2 \\
&\q+\w{g_1}(k)-\w{g_2}(k),
\\
u,_2(x_1,-1)&=\int_{-1}^0y_2w(y_2)\,dy_2\\
&\q+\cth{\av{k}}k^2\int_{-1}^0{\sinh(|k|(1+y_2))}\pare{\w{g_1}(k)+y_2\pare{\w{g_1}(k)-\w{g_2}(k)}}\,dy_2 \\
&\q-k^2\int_{-1}^{0}\cosh(|k|(1+y_2))\pare{\w{g_1}(k)+y_2\pare{\w{g_1}(k)-\w{g_2}(k)}}\,dy_2\\
&\q+\w{g_1}(k)-\w{g_2}(k).
\end{align*}

\end{proof}

\begin{lemma}\label{lem:sol-ell2}
Consider the problem
\begin{align*}
\D u(x_1,x_2)&= w(x_1,x_2)&&\t{in }\mathbb{S}^1\times(-1,0),\\
u(x_1,0)&=g_1&&\t{for }x_1\in \mathbb{S}^1,\\
u(x_1,-1)&=g_2&&\t{for }x_1\in \mathbb{S}^1.
\end{align*}
Then
\begin{align}\label{eq:u2}
u(x_1,x_2)&=\frac{1}{\sqrt{2\pi}}\sum_{k\in\mathbb{Z}}
\left\{
\w{G}_1(k,x_2)+\w{G}_2(k,x_2) +\frac{\sinh(|k|x_2)}{\sinh(|k|)} \frac{1}{|k|}\int_{-1}^0{\sinh(|k|(1+y_2))}\w{w}(k,y_2)\,dy_2\right.\nonumber \\
&\left.\qq + \frac{1}{|k|}\int_{0}^{x_2}\sinh(|k|(x_2-y_2))\w{w}(k,y_2)\,dy_2
\right\}e^{ikx_1},
\end{align}
where
\begin{align*}
\w{G}_1(k,x_2)=e^{|k|x_2}\w{g_1}(k)+e^{-|k|}\frac{\sinh(|k|x_2)}{\sinh(|k|)} \w{g_1}(k)
\q\t{and}\q
\w{G}_2(k,x_2)=-\frac{\sinh(|k|x_2)}{\sinh(|k|)}\w{g_2}(k).%\\
\end{align*}
Furthermore,
\begin{equation*}
u,_1(x_1,0)=g_{1,1}(x_1),\qq
u,_1(x_1,-1)=g_{2,1}(x_1),
\end{equation*}
and
\begin{align*}
u,_2(x_1,0)&= \Lambda(\sinh(\Lambda))^{-1}\bra{\cosh(\Lambda)g_1(x_1)-g_2(x_1)}  +(\sinh(\Lambda))^{-1}\int_{-1}^0\sinh(\Lambda(1+y_2))w(x_1,y_2)\,dy_2,
\\
u,_2(x_1,-1)
&=\Lambda (\sinh(\Lambda))^{-1}\bra{g_1(x_1)-\cosh(\Lambda)g_2(x_1)}  +\coth(\Lambda)\int_{-1}^0\sinh(\Lambda(1+y_2))w(x_1,y_2) \,dy_2,
\\
&\q -\int_{-1}^0\cosh(\Lambda(1+y_2)) w(x_1,y_2)\,dy_2.
\end{align*}
\end{lemma}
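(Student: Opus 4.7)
The plan is to Fourier-transform in the periodic variable $x_1$, reducing the Poisson problem to a one-parameter family of two-point ODE boundary value problems in $x_2$, to solve each in closed form, and then to invert the series. For each $k\in\mathbb{Z}$ the equation becomes
\begin{equation*}
\widehat{u}_{,22}(k,x_2)-k^2\widehat{u}(k,x_2)=\widehat{w}(k,x_2),\qquad \widehat{u}(k,0)=\widehat{g_1}(k),\qquad \widehat{u}(k,-1)=\widehat{g_2}(k),
\end{equation*}
and by linearity I would split $\widehat u=\widehat u^{(1)}+\widehat u^{(2)}+\widehat u^{(3)}$, corresponding to the top datum only, the bottom datum only, and the forcing with zero boundary values.

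For $\widehat u^{(1)}$ and $\widehat u^{(2)}$ I would use the basis $\{\sinh(|k|(x_2+1)),\sinh(|k|x_2)\}$ adapted to the two boundaries; matching the single nonzero boundary value gives $\widehat u^{(1)}=\widehat{g_1}(k)\sinh(|k|(x_2+1))/\sinh(|k|)$ and $\widehat u^{(2)}=-\widehat{g_2}(k)\sinh(|k|x_2)/\sinh(|k|)$. The addition formula $\sinh(|k|(x_2+1))=\sinh(|k|)e^{|k|x_2}+e^{-|k|}\sinh(|k|x_2)$ rewrites the first piece exactly as $\widehat{G}_1$, and the second already coincides with $\widehat{G}_2$. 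For $\widehat u^{(3)}$ I would take the particular solution $\widehat u_p(k,x_2)=|k|^{-1}\int_0^{x_2}\sinh(|k|(x_2-y_2))\widehat w(k,y_2)\,dy_2$, which satisfies the inhomogeneous ODE by direct differentiation under the integral and vanishes at $x_2=0$, then add a multiple of $\sinh(|k|x_2)$ to enforce $\widehat u^{(3)}(k,-1)=0$. Computing $\widehat u_p(k,-1)$ with the sign flip $\sinh(|k|(-1-y_2))=-\sinh(|k|(1+y_2))$ produces precisely the $\sinh(|k|x_2)/\sinh(|k|)$ prefactor in \eqref{eq:u2}, and inverse Fourier transformation yields the stated representation of $u$.

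The tangential traces of $u_{,1}$ follow immediately by differentiating the Dirichlet conditions $u(x_1,0)=g_1(x_1)$ and $u(x_1,-1)=g_2(x_1)$ in $x_1$, with no computation needed. For the normal traces of $u_{,2}$, I would differentiate the Fourier series mode-by-mode in $x_2$, applying the fundamental theorem of calculus to the two integrals, and evaluate at $x_2=0$ and $x_2=-1$. The main obstacle is purely algebraic bookkeeping: after using $\cosh(0)=1$, $\sinh(0)=0$ together with the parities $\cosh(-|k|)=\cosh(|k|)$ and $\sinh(-|k|)=-\sinh(|k|)$, the $\widehat G_1+\widehat G_2$ contribution at $x_2=0$ must be regrouped as $(|k|/\sinh(|k|))[\cosh(|k|)\widehat{g_1}(k)-\widehat{g_2}(k)]$ and recognized as the Fourier multiplier $\Lambda(\sinh\Lambda)^{-1}[\cosh(\Lambda)g_1-g_2]$, with the analogous regrouping $(|k|/\sinh(|k|))[\widehat{g_1}(k)-\cosh(|k|)\widehat{g_2}(k)]$ at $x_2=-1$; the two integral pieces at $x_2=-1$ recombine via $\coth(|k|)\sinh(|k|(1+y_2))-\cosh(|k|(1+y_2))$, which accounts for the two $\int_{-1}^0$ terms in the stated formula for $u_{,2}(x_1,-1)$.
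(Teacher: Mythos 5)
Your proposal is correct and follows essentially the same route as the paper: Fourier transform in $x_1$, solve the resulting constant-coefficient ODE in $x_2$ mode by mode, fit the boundary data, and differentiate the series term by term to read off the boundary traces. The only difference is cosmetic --- you superpose three sub-solutions in the adapted basis $\{\sinh(|k|(x_2+1)),\,\sinh(|k|x_2)\}$, whereas the paper writes $c_1(k)e^{|k|x_2}+c_2(k)e^{-|k|x_2}$ plus the Duhamel integral and solves for $c_1,c_2$ from the boundary conditions; both give \eqref{eq:u2} and the same hyperbolic identities at $x_2=0$ and $x_2=-1$.
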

\begin{proof}
Reasoning as in  \citep[Lemma A.1]{granero2019models}, we get
\begin{equation*}\label{eq:71}
\w{u}(k,x_2)=c_1(k)e^{|k|x_2}+c_2(k)e^{-|k|x_2}+\frac{1}{2|k|}\int_{0}^{x_2}\pare{e^{|k|(x_2-y_2)}-e^{-|k|(x_2-y_2)}}\w{w}(k,y_2)\,dy_2.
\end{equation*}
After imposing the boundary conditions, we find that $c_1,\,c_2$ verify
\begin{align*}
c_1(k)&=\w{g_1}-c_2(k),\\
c_2(k)&= \frac{1}{e^{|k|}-e^{-|k|}} \bra{\w{g_2}-e^{-|k|}\w{g_1}+\frac{1}{2|k|}\int_{-1}^{0}\pare{e^{-|k|(1+y_2)}-e^{|k|(1+y_2)}}\w{w}(k,y_2)\,dy_2}.
\end{align*}
This leads to \eqref{eq:u2} and
\begin{align*}
\w{G}_1(k,x_2)=e^{|k|x_2}\w{g_1}(k)+\frac{e^{|k|x_2}-e^{-|k|x_2}}{e^{|k|}-e^{-|k|}}e^{-|k|} \w{g_1}(k)
\q\t{and}\q
\w{G}_2(k,x_2)=-\frac{e^{|k|x_2}-e^{-|k|x_2}}{e^{|k|}-e^{-|k|}} \w{g_2}(k).
\end{align*}

We conclude computing
\begin{align*}
u,_1(x_1,0)&=\frac{1}{\sqrt{2\pi}}\sum_{k\in\mathbb{Z}} ik\w{g_1}(k)e^{ikx_1},
\\
u,_1(x_1,-1)&=\frac{1}{\sqrt{2\pi}}\sum_{k\in\mathbb{Z}} ik \w{g_2}(k) e^{ikx_1},
\end{align*}
and
\begin{align*}
u,_2(x_1,0)&=\frac{1}{\sqrt{2\pi}}\sum_{k\in\mathbb{Z}}|k|
\left\{\frac{e^{|k|}+e^{-|k|}}{e^{|k|}-e^{-|k|}}\w{g_1}(k)-\frac{2}{e^{|k|}-e^{-|k|}}\w{g_2}(k)
\right.\\
&\left. \qq
-\frac{1}{2|k|}\frac{2}{e^{|k|}-e^{-|k|}}\int_{-1}^0\pare{e^{-|k|(1+y_2)}-e^{|k|(1+y_2)}}\w{w}(k,y_2)\,dy_2
\right\}e^{ikx_1},
\\
&=\frac{1}{\sqrt{2\pi}}\sum_{k\in\mathbb{Z}}|k|
\biggl\{\coth(|k|)\w{g_1}(k)-(\sinh(|k|))^{-1}\w{g_2}(k) \\
&\qq
-\frac{1}{2|k|}(\sinh(|k|))^{-1}\int_{-1}^0\pare{e^{-|k|(1+y_2)}-e^{|k|(1+y_2)}}\w{w}(k,y_2)\,dy_2
\biggl\}e^{ikx_1},
\\
u,_2(x_1,-1)&=\frac{1}{\sqrt{2\pi}}\sum_{k\in\mathbb{Z}}|k|
\left\{
\frac{2}{e^{|k|}-e^{-|k|}}\w{g_1}(k)-\frac{e^{|k|}+e^{-|k|}}{e^{|k|}-e^{-|k|}}\w{g_2}(k)\right.\\
&\left.\qq -\frac{1}{2|k|}\frac{e^{|k|}+e^{-|k|}}{e^{|k|}-e^{-|k|}}\int_{-1}^0\pare{e^{-|k|(1+y_2)}-e^{|k|(1+y_2)}}\w{w}(k,y_2)\,dy_2\right.\\
&\left.\qq -\frac{1}{2|k|}\int_{-1}^{0}\pare{e^{-|k|(1+y_2)}+e^{|k|(1+y_2)}}\w{w}(k,y_2)\,dy_2
\right\}e^{ikx_1},\\
&=\frac{1}{\sqrt{2\pi}}\sum_{k\in\mathbb{Z}}|k|
\biggl\{
(\sinh(|k|))^{-1}\w{g_1}(k)-\coth(|k|)\w{g_2}(k) \\
&\qq -\frac{1}{2|k|}\coth(|k|)\int_{-1}^0\pare{e^{-|k|(1+y_2)}-e^{|k|(1+y_2)}}\w{w}(k,y_2)\,dy_2 \\
&\qq -\frac{1}{2|k|}\int_{-1}^{0}\pare{e^{-|k|(1+y_2)}+e^{|k|(1+y_2)}}\w{w}(k,y_2)\,dy_2
\biggr\}e^{ikx_1}.
\end{align*}
Note that $\frac{|k|}{e^{|k|}-e^{-|k|}}=O(1)$ for $k\sim 0$.

\end{proof}

%%%%%%%%%%%%%%%%%%%
\section*{Acknowledgments}
The authors thank Alberto d'Onofrio for some stimulating discussions and comments.\\
Both authors are funded by  the project "An\'alisis Matem\'atico Aplicado y Ecuaciones Diferenciales" Grant PID2022-141187NB-I00 funded by MCIN /AEI /10.13039/501100011033 / FEDER, UE and acronym "AMAED". This publication is part of the project PID2022-141187NB-I00 funded by MCIN/ AEI /10.13039/501100011033.
 The work of M.M. was partially supported by a public grant as part of the Investissement d'avenir project, FMJH; by Grant RYC2021-033698-I, funded by the Ministry of Science and Innovation/State Research Agency/10.13039/501100011033 and by the European Union "NextGenerationEU/Recovery, Transformation and Resilience Plan"; the project PID2022-140494NA-I00 funded by MCIN/AEI/10.13039/501100011033/ FEDER, UE.
%%%%%%%%%%%%%%%%%%%

%\bibliographystyle{plain}
\bibliographystyle{abbrvnat}
\setcitestyle{authoryear,open={(},close={)}}
\bibliography{references}

\end{document}